\setlist[enumerate,1]{label={\upshape(\roman*)}}
    \newcommand{\Rmnum}[1]
    {\expandafter\@slowromancap\romannumeral #1@}
\newtheorem{thm}{Theorem}[section]
\newtheorem{prop}[thm]{Proposition}
\newtheorem{lemma}[thm]{Lemma}
\newtheorem{cor}[thm]{Corollary}
\newtheorem{example}[thm]{Example}
\newtheorem{defin}[thm]{Definition}
\theoremstyle{definition}
\newtheorem{remark}[thm]{Remark}
\title[(Total) Perfect codes in (extended) subgroup sum graphs]{(Total) Perfect codes in (extended) subgroup sum graphs}
\date{}
\thanks{*Corresponding author}
\author[Ma]{Xuanlong Ma}
\address{School of Science\\Xi'an Shiyou University\\Xi'an 710065\\China}
\email{xuanlma@xsyu.edu.cn}
\author[Yang]{Yuefeng Yang*}
\address{School of Science\\China University of Geosciences\\Beijing 100083\\China}
\email{yangyf@cugb.edu.cn}
\author[Zhai]{Liangliang Zhai}
\address{School of Science\\Xi'an Shiyou University\\Xi'an 710065\\China}
\email{zhailiang111@126.com}
\begin{document}

\begin{abstract}
Given a finite group $G$ with identity $e$ and a normal subgroup $H$ of $G$, the subgroup sum graph $\Gamma_{G,H}$ (resp. extended subgroup sum graph $\Gamma_{G,H}^+$) of $G$ with respect to $H$ is the graph with vertex set $G$, in which distinct vertices $x$ and $y$ are adjacent whenever $xy\in H\setminus \{e\}$ (resp. $xy\in H$). A group $G$ is said to be {\em code-perfect} if for any normal subgroup $H$ of $G$, $\Gamma_{G,H}$ admits a perfect code. In this paper, we give a necessary and sufficient condition for which normal subgroups $H$ of $G$ satisfy that a (extended) subgroup sum graph of $G$ with respect to $H$ admits a (total) perfect code, and classify all code-perfect Dedekind groups. As an application, we classify all normal subgroups such that the subgroup sum graph of a cyclic group, a dihedral group or a dicyclic group with respect to such a normal subgroup admits perfect codes, respectively. We also determine all abelian groups $A$ and
subgroups $H$ of $A$ such that $\Gamma_{A,H}$ admits a total perfect code.
\end{abstract}

\keywords{Perfect code; Total perfect code; code-perfect, subgroup sum graph; extended subgroup sum graph}

\subjclass[2010]{05C25}.

\maketitle
\section{Introduction}

In this paper, every group considered is finite, and every graph considered is finite and simple. For a graph $\Gamma$ with vertex set $V$, a {\em code} in $\Gamma$ is simply a subset of $V$. A code $C$ of $\Gamma$ is called a {\em perfect code} \cite{Kr}
of $\Gamma$ if every vertex of $\Gamma$ is at distance no more than one to exactly one vertex in $C$. In other words, $C$ is a {\em perfect code} in $\Gamma$ provided that $C$ is independent in $\Gamma$ and every vertex of $V\setminus C$ is adjacent to precisely one vertex of $C$. A code $C$ of $\Gamma$ is said to be a {\em total
perfect code} \cite{GAA} if every vertex of $\Gamma$ has exactly one neighbour in $C$. In some references, a perfect code is also
called an {\em efficient dominating set} \cite{DYP,DYP2,AB23,YSK22,YSK20,DI17,TT13} or {\em independent perfect dominating set} \cite{WX21,Le}, and a total perfect code is also called an efficient open dominating set \cite{HTW}. In the past a few years, perfect codes and total perfect codes in Cayley graphs have attracted considerable attention, see, for example \cite{FHZ,CWZ,HXZ18,MWWZ,ZZ20,ZZ21,Zh,Z15,Kh23,Kh24,YYF}.

Cayley graphs are excellent models for interconnection networks. Hence, there are
many investigations in connection with parallel processing. The definition of the Cayley graph was introduced by Arthur Cayley in 1878 to explain the concept of abstract groups which are described by a set of generators. A variant of Cayley graphs are the Cayley sum graphs. The Cayley sum graph is first defined for an abelian group in \cite{CFRK} and then it is generalized to any arbitrary group in \cite{AM16}. 
The perfect codes and total perfect codes in Cayley sum graphs have been studied by a number of authors \cite{Wa24,Zh24,Ma20,Ma22}.

In \cite{Cam22}, Cameron, Raveendra Prathap and Tamizh Chelvam extended the process of generalization of Cayley graph, by introducing the subgroup sum graph and extended subgroup sum graph for an abelian group. Also in \cite{Cam22}, they studied perfectness, clique number and independence number, connectedness, diameter, spectrum, and domination number of these graphs and their complements.

One can generalize the concepts of subgroup sum graphs and extended subgroup sum graphs over arbitrary groups. Let $G$ be a group with identity element $e$. Two elements $a,b\in G$ are {\em conjugate} in $G$ if there exists $x\in G$ such that $x^{-1}ax=b$. The set of all conjugates of element $a\in G$ is called the {\em conjugacy class} of $a$, and is denoted by $a^G$. 
A subgroup $H$ of $G$ is {\em normal} if $xH=Hx$ for any $x\in G$. A group is called a {\em Dedekind group} if all its subgroups are normal. 
For a normal subgroup $H$ of $G$, the {\em subgroup sum graph} $\Gamma_{G,H}$ (resp. {\em extended subgroup sum graph} $\Gamma_{G,H}^+$) of $G$ with respect to the normal subgroup $H$ is the graph with vertex set $G$, and distinct vertices $x$ and $y$ are adjacent whenever $xy\in H\setminus \{e\}$ (resp. $xy\in H$). Since $e^G=\{e\}$ and $yx=y(xy)y^{-1}$, the condition that $H$ is a normal subgroup ensures that both $\Gamma_{G,H}$ and $\Gamma_{G,H}^+$ are undirected simple graphs.

In this paper, we focus on subgroup sum graphs and extended subgroup sum graphs having (total) perfect codes. It may happen that every subgroup sum graph of a given group admits a perfect code. We call a group with this property a code-perfect group. More explicitly, a group $G$ is  said to be {\em code-perfect} if for any normal subgroup $H$ of $G$, $\Gamma_{G,H}$ admits a perfect code. It is natural to ask which groups are code-perfect. We will answer this
question for Dedekind groups.
The remainder of this paper is organized as follows.
In Section 2, we give some results concerning subgroup sum graphs and extended subgroup sum graphs. In Section 3, we give a necessary and sufficient condition for which normal subgroups $H$ of a given group $G$ satisfy that $\Gamma_{G,H}$ admits a perfect code, and determine all normal subgroups such that the subgroup sum graph of a cyclic group, a dihedral group or a dicyclic group with respect to such a normal subgroup admits perfect codes. In Section 4, we classify all code-perfect Dedekind groups. In Section 5, we give a necessary and sufficient condition for which normal subgroups $H$ of a given group $G$ satisfy that $\Gamma_{G,H}$ admits a perfect code, and determine all abelian groups $A$ and its subgroups $H$ such that $\Gamma_{A,H}$ admits a total perfect code. In Section 6, we give a necessary and sufficient condition for which normal subgroups $H$ of a given group $G$ satisfy that $\Gamma_{G,H}$ admits a (total) perfect  code.

\section{Preliminaries}

In the reminder of this paper, we always assume that $G$ is a multiplicatively written group with identity $e$. In this section, we give some basic results concerning subgroup sum graphs and extended subgroup sum graphs, which will be used frequently.


\begin{lemma}\label{lem-1}
Let $H$ be a normal subgroup of $G$, and $x$ an element of $G$. Then the following are equivalent:
\begin{enumerate}
\item\label{lem-1-1} $x^2\in H$;

\item\label{lem-1-2} $Hx=Hx^{-1}$;

\item\label{lem-1-4} $Hx$ is inverse-closed;

\item\label{lem-1-3} $y^2\in H$ for all $y\in Hx$.
\end{enumerate}
\end{lemma}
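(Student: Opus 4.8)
The plan is to prove the four conditions equivalent via the cycle $(i)\Rightarrow(ii)\Rightarrow(iii)\Rightarrow(iv)\Rightarrow(i)$, exploiting normality of $H$ throughout so that left and right cosets coincide and $Hg=gH$ for every $g$.

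First I would establish $(i)\Rightarrow(ii)$. Assuming $x^2\in H$, I want to show the coset $Hx$ equals the coset $Hx^{-1}$. Since two cosets are equal precisely when one representative lies in the other coset, it suffices to check $x\in Hx^{-1}$, equivalently $x\cdot x=x^2\in H$, which is exactly the hypothesis. Conversely, for $(ii)\Rightarrow(i)$ I note $Hx=Hx^{-1}$ forces $x\in Hx^{-1}$, hence $x^2\in H$; this also packages the reverse direction cleanly, but since I am running a cycle I only need the forward implications. For $(ii)\Rightarrow(iii)$, I recall that a coset $Hx$ is \emph{inverse-closed} means $(Hx)^{-1}=Hx$ as a set; using normality, $(Hx)^{-1}=x^{-1}H^{-1}=x^{-1}H=Hx^{-1}$, so inverse-closedness is literally the statement $Hx^{-1}=Hx$, giving the implication at once (and its converse).

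Next I would handle $(iii)\Rightarrow(iv)$. Take any $y\in Hx$; I must show $y^2\in H$. Since $Hx$ is inverse-closed, $y^{-1}\in Hx=Hy$ (the last equality because $y$ is itself a representative of the coset $Hx$), so $y^{-1}=hy$ for some $h\in H$, whence $y^2=h^{-1}\in H$. Finally $(iv)\Rightarrow(i)$ is immediate: the hypothesis quantifies over all $y\in Hx$, and since $x\in Hx$ we may take $y=x$ to conclude $x^2\in H$. Assembling these four implications closes the cycle and yields the equivalence of all four statements.

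I expect no serious obstacle here; the only point demanding care is the bookkeeping with normality, ensuring that the manipulations $(Hx)^{-1}=Hx^{-1}$ and $Hy=Hx$ for $y\in Hx$ are justified by $H\trianglelefteq G$ rather than silently assuming $G$ abelian. The computation $y^2=h^{-1}$ in the step $(iii)\Rightarrow(iv)$ is the one place where the abstract set-theoretic notion of inverse-closedness must be converted into a concrete element equation, so I would state that translation explicitly to keep the argument transparent.
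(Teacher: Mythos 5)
Your proposal is correct and follows essentially the same route as the paper: both establish the cycle $x^2\in H\Rightarrow Hx=Hx^{-1}\Rightarrow Hx$ inverse-closed $\Rightarrow y^2\in H$ for all $y\in Hx\Rightarrow x^2\in H$, with normality invoked exactly where the paper invokes it, namely in the identification $(Hx)^{-1}=x^{-1}H=Hx^{-1}$. Your element computation $y^{-1}=hy\Rightarrow y^2=h^{-1}$ is just a slightly tidier bookkeeping of the paper's $y=hx$, $y^{-1}=h'x$, $y^2=hh'^{-1}$ step.
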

\begin{proof}
It is clear that \ref{lem-1-3} implies \ref{lem-1-1}, and \ref{lem-1-1} implies \ref{lem-1-2}.

Suppose that \ref{lem-1-2} holds. For all $hx\in Hx$ with $h\in H$, since $H$ is normal, we have $(hx)^{-1}=x^{-1}h^{-1}\in x^{-1}H=Hx^{-1}=Hx$. Then \ref{lem-1-4} is valid.

Suppose that \ref{lem-1-4} holds. Let $y\in Hx$. Then there exists $h\in H$ such that $y=hx$. Since $Hx$ is inverse-closed, there exists $h'\in H$ such that $y^{-1}=h'x$, which implies $y=x^{-1}h'^{-1}$. It follows that $y^2=hxx^{-1}h'^{-1}=hh'^{-1}\in H$. Since $y\in Hx$ was arbitrary, \ref{lem-1-3} is valid.

This completes the proof of this lemma.
\end{proof}

For an element $x\in G$, let $o(x)$ denote the {\em order} of $x$, that is, the smallest positive integer $m$ such that $x^m=e$. An element $x$ is called an {\em involution} if $o(x)=2$.

\begin{lemma}\label{lem-1.2}
Let $H$ be a normal subgroup of $G$, and $x$ an element of $G$. Then $x^2\notin H$ if and only if $y^2\notin H$ for each $y\in Hx$. Moreover, if $x^2\notin H$, then $Hx\cup Hx^{-1}$ is inverse-closed and has no involution.
\end{lemma}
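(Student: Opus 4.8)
The plan is to derive everything from the equivalence of \ref{lem-1-1} and \ref{lem-1-3} in Lemma~\ref{lem-1}, whose content is that the property ``square lies in $H$'' is constant along each coset $Hx$: either every element of $Hx$ squares into $H$, or none does. The first assertion of the present lemma is then simply the ``none does'' half of this dichotomy, and the \emph{moreover} part is a short computation using the normality of $H$.

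For the biconditional, the backward direction is immediate: since $x\in Hx$, the hypothesis that $y^2\notin H$ for every $y\in Hx$ specializes at $y=x$ to $x^2\notin H$. The forward direction is where some care is needed. Assuming $x^2\notin H$, suppose toward a contradiction that some $y\in Hx$ satisfies $y^2\in H$. Because $y\in Hx$ we have $Hy=Hx$, so applying the implication \ref{lem-1-1}$\Rightarrow$\ref{lem-1-3} of Lemma~\ref{lem-1} with $x$ replaced by $y$ yields $z^2\in H$ for all $z\in Hy=Hx$; taking $z=x$ gives $x^2\in H$, contradicting the hypothesis. Hence $y^2\notin H$ for every $y\in Hx$.

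For the \emph{moreover} part, first note that $x^2\notin H$ forces $(x^{-1})^2=(x^2)^{-1}\notin H$, since $H$ is closed under inverses. Applying the biconditional just proved to both $x$ and $x^{-1}$ then gives $y^2\notin H$ for every $y\in Hx\cup Hx^{-1}$; in particular $y^2\neq e$ for every such $y$, so $Hx\cup Hx^{-1}$ contains no involution. Inverse-closure follows from normality: if $y=hx$ with $h\in H$, then $y^{-1}=x^{-1}h^{-1}\in x^{-1}H=Hx^{-1}$, and symmetrically $y^{-1}\in Hx$ whenever $y\in Hx^{-1}$, so $(Hx\cup Hx^{-1})^{-1}\subseteq Hx\cup Hx^{-1}$.

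The only genuinely non-routine point is the forward direction of the biconditional: a naive negation of the equivalence \ref{lem-1-1}$\Leftrightarrow$\ref{lem-1-3} yields only the \emph{existence} of some $y\in Hx$ with $y^2\notin H$, whereas the statement asserts this for \emph{all} $y\in Hx$. I expect this to be the main (and essentially only) obstacle, and it is overcome by exploiting the coset-invariance above, namely by re-applying Lemma~\ref{lem-1} at an arbitrary representative $y$ of the coset rather than at $x$ itself.
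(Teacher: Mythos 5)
Your proof is correct. The \emph{moreover} part is essentially identical to the paper's: the same normality computation for inverse-closure, and the same observation that an involution $y$ would have $y^2=e\in H$, contradicting the first statement. Where you genuinely differ is in the forward direction of the biconditional: the paper proves it by a fresh direct computation (if $(hx)^2\in H$, write $hx=xh'$ by normality, so $(hx)^2=hx^2h'$ and hence $x^2\in H$, a contradiction), whereas you deduce it formally from Lemma~\ref{lem-1} by applying the implication \ref{lem-1-1}$\Rightarrow$\ref{lem-1-3} at the representative $y$ rather than at $x$, using $Hy=Hx$. Your route is the cleaner one logically---it makes explicit that the whole content is the coset-invariance already established in Lemma~\ref{lem-1}, and it correctly flags and resolves the quantifier subtlety (naive negation of the equivalence only yields existence of one bad $y$, not the universal statement). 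The paper's route buys nothing extra except self-containedness; both are valid, and your observation that $(x^{-1})^2\notin H$ follows from $x^2\notin H$ matches the paper's (unjustified but obvious) step ``$x^2\notin H$ implies $x^{-2}\notin H$.''
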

\begin{proof}
For the first statement, it suffices to prove the necessity. Suppose, to the contrary that there exists $hx\in Hx$ such that $h\in H$ and $(hx)^2\in H$. Since $H$ is normal, there exists $h'\in H$ such that $hx=xh'$. Since $hx^2h'=(hx)^2\in H$, one gets $x^2\in H$, a contradiction.

Now suppose $x^2\notin H$. Let $y\in Hx\cup Hx^{-1}$. Since $H$ is normal, there exist $h'\in H$ and $\epsilon\in\{\pm1\}$ such that $y=x^{\epsilon}h'$, which implies $y^{-1}=h'^{-1}x^{-\epsilon}\in Hx\cup Hx^{-1}$. Since $y\in Hx\cup Hx^{-1}$ was arbitrary, $Hx\cup Hx^{-1}$ is inverse-closed. Since $x^2\notin H$, one has $x^{-2}\notin H$. By the first statement, $Hx\cup Hx^{-1}$ has no involution.
\end{proof}

Let $H$ be a nonempty subset of $G$. Write
\[
H^2=\{h^2: h\in H\}.
\]
Remark that $H^2$ is consisting of all square elements of $H$ if $H$ is a subgroup of $G$.

\begin{lemma}\label{o-og}
Suppose that $G$ is an abelian group of odd order and $H$ is a subgroup of $G$. If $g^2\in H$ for some $g\in G$, then $g\in H$.
\end{lemma}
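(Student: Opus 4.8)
The plan is to exploit the elementary fact that in a finite group an element of odd order can always be recovered as a power of its square. First I would observe that since $|G|$ is odd, Lagrange's theorem forces $o(g)$ to divide $|G|$, so $n := o(g)$ is odd; write $n = 2m+1$. Then $m+1 = (n+1)/2$ is a positive integer, and a one-line computation gives
\[
(g^2)^{m+1} = g^{2m+2} = g^{n+1} = g^n \cdot g = g,
\]
using $g^n = e$. Since $g^2 \in H$ and $H$ is a subgroup (hence closed under taking powers), the element $(g^2)^{m+1}$ lies in $H$, and by the displayed identity this element is exactly $g$. Therefore $g \in H$, as required.

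The only point needing any care is the index arithmetic, namely that $(n+1)/2$ is a genuine integer and that the exponent collapses correctly; both are immediate once $n$ is odd. I do not anticipate a real obstacle, since the entire content of the lemma is the observation that raising to the second power is a bijection on a group of odd order (equivalently, $2$ is invertible modulo $o(g)$), so the unique square root of $g^2$, which is $g$ itself, is forced into $H$ together with $g^2$.

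It is worth remarking that commutativity is never used in this argument: the conclusion holds verbatim for any element of odd order in an arbitrary finite group, the abelian hypothesis of $G$ merely matching the setting in which the lemma is later invoked. Should one prefer a more structural phrasing, the same fact can be stated as: the squaring map restricted to the cyclic subgroup $\langle g\rangle$ is an automorphism, so $g \in \langle g^2\rangle \subseteq H$ whenever $g^2 \in H$.
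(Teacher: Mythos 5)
Your proof is correct, but it takes a different route from the paper's. The paper argues via the subgroup $H$ itself: since $|H|$ is odd, the squaring map is surjective on $H$, so $g^2=h^2$ for some $h\in H$; then commutativity gives $(gh^{-1})^2=e$, and the absence of involutions in a group of odd order forces $g=h\in H$. You instead work inside $\langle g\rangle$: writing $o(g)=2m+1$, you exhibit $g=(g^2)^{m+1}$ as an explicit power of $g^2$, so $g\in\langle g^2\rangle\subseteq H$. Your version is slightly more economical and, as you correctly note, does not use the abelian hypothesis at all --- it only needs $o(g)$ odd, so it proves the stronger statement that in any finite group, an element of odd order lies in every subgroup containing its square. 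The paper's argument genuinely uses commutativity in the step $(gh^{-1})^2=g^2h^{-2}=e$, which fails in general nonabelian groups, so it does not generalize the same way. Both are complete; yours buys a little extra generality and avoids the surjectivity-of-squaring detour.
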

\begin{proof}
Since $G$ has odd order, we have $H^2=H$. As a result, $g^2=h^2$ for some $h\in H$. It follows that $(gh^{-1})^2=e$, and so $g=h\in H$, as desired.
\end{proof}

The following result determines the structures of $\Gamma_{G,H}$ and $\Gamma_{G,H}^{+}$ with the case where the normal subgroup $H$ is {\em trivial} (either $\{e\}$ or $G$), which is analogous to \cite[Theorem 1]{Cam22}.

\begin{thm}\label{s-th1}
The following hold:
\begin{enumerate}
\item\label{s-th1-1} $\Gamma_{G,\{e\}}$ is the empty graph with $|G|$ vertices, while $\Gamma_{G,\{e\}}^{+}$ is a union of some independent edges and isolated vertices where $\{x,y\}\in E(\Gamma_{G,\{e\}}^{+})$ if and only if $x$ is the inverse of $y$;

\item\label{s-th1-2} $\Gamma_{G,G}^{+}$ is the complete graph with $|G|$ vertices, and $\Gamma_{G,G}$ is obtained by deleting all edges $\{x,x^{-1}\}$ in $\Gamma_{G,G}^{+}$ where $o(x)\ge 3$. In particular, the identity $e$ is adjacent to any other vertex in $\Gamma_{G,G}$.
\end{enumerate}
\end{thm}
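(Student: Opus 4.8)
The plan is to read off both parts directly from the two adjacency rules, the only point requiring any care being the distinction between a vertex that is its own inverse and one that is not. Throughout, I recall that distinct vertices $x,y$ are adjacent in $\Gamma_{G,H}$ exactly when $xy\in H\setminus\{e\}$, and in $\Gamma_{G,H}^{+}$ exactly when $xy\in H$.

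For part \ref{s-th1-1} I would treat the two graphs in turn. In $\Gamma_{G,\{e\}}$ the adjacency condition reads $xy\in\{e\}\setminus\{e\}=\emptyset$, which can never be satisfied; hence there are no edges and the graph is the empty graph on $|G|$ vertices. In $\Gamma_{G,\{e\}}^{+}$ the condition becomes $xy\in\{e\}$, i.e.\ $y=x^{-1}$, so the only candidate neighbour of a vertex $x$ is $x^{-1}$. I would then split on whether $x=x^{-1}$: since $x=x^{-1}$ is equivalent to $x^{2}=e$, the vertices with $x^{2}=e$ (namely $e$ together with the involutions of $G$) have no distinct inverse and are isolated, while every other vertex $x$ is joined to $x^{-1}$ and to nothing else, producing the independent edge $\{x,x^{-1}\}$. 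This yields precisely the claimed description, with $\{x,y\}$ an edge if and only if $y=x^{-1}$.

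For part \ref{s-th1-2} I would argue similarly. In $\Gamma_{G,G}^{+}$ the condition $xy\in G$ holds for every pair of distinct vertices, so the graph is the complete graph on $|G|$ vertices. In $\Gamma_{G,G}$ the condition $xy\in G\setminus\{e\}$ is equivalent to $xy\neq e$, i.e.\ $y\neq x^{-1}$; comparing with $\Gamma_{G,G}^{+}$, the edges that disappear are exactly those $\{x,y\}$ with $y=x^{-1}$ and $x\neq y$. Since $x\neq x^{-1}$ is equivalent to $o(x)\ge 3$, these are precisely the edges $\{x,x^{-1}\}$ with $o(x)\ge 3$, as claimed. The final assertion is then immediate: for any $y\neq e$ one has $ey=y\neq e$, so $e$ remains adjacent to every other vertex.

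I do not anticipate a genuine obstacle here, since each statement is an immediate unwinding of the definitions; the single subtlety is to respect the requirement that adjacent vertices be distinct, so that an element equal to its own inverse contributes an isolated vertex rather than a (forbidden) loop. Keeping this bookkeeping explicit is what cleanly separates the isolated vertices from the independent edges in \ref{s-th1-1} and identifies the deleted edges in \ref{s-th1-2}.
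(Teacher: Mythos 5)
Your proof is correct; the paper itself gives no proof of this theorem (stating it only as analogous to Theorem~1 of the cited reference on subgroup sum graphs of abelian groups), and your direct unwinding of the two adjacency rules, with the careful case split on whether $x=x^{-1}$, is exactly the intended argument.
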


These graphs considered in Theorem~\ref{s-th1} are not  interesting, so where necessary below we assume that $H$ is a  non-trivial normal subgroup of $G$.

By Lemmas~\ref{lem-1} and \ref{lem-1.2}, we give
the basic structure of $\Gamma_{G,H}$ and $\Gamma_{G,H}^{+}$ in the next result, which is similar to \cite[Theorem 2]{Cam22}.

\begin{thm}\label{s-th2}
Let $H$ be a non-trivial normal subgroup of $G$ with $|H|=t$. Then the following hold:
\begin{enumerate}
  \item\label{s-th2-1} Every connected component of $\Gamma_{G,H}^{+}$ is isomorphic to either the complete graph $K_{t}$ or the complete bipartite graph $K_{t,t}$. In particular, if $x^2\notin H$, then the subgraph induced by $Hx\cup Hx^{-1}$ is a connected component isomorphic to $K_{t,t}$; if $x^2\in H$, then the subgraph induced by $Hx$ is a connected component isomorphic to $K_{t}$;
  \item\label{s-th2-2} $\Gamma_{G,H}$ is obtained from $\Gamma_{G,H}^{+}$ by deleting a perfect matching from every connected component which is complete bipartite, and deleting a matching from a complete connected component on a coset $Hx$ covering all elements other than elements of $G^2$. In particular, in the  connected component of $\Gamma_{G,H}$ induced by $Hx$ with $x^2\in H$, any element of $G^2$ is adjacent to any other vertex.
\end{enumerate}
\end{thm}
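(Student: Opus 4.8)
The plan is to derive both statements from a single structural observation: in $\Gamma_{G,H}^{+}$ the set of vertices $y$ with $xy\in H$ is exactly the coset $Hx^{-1}$, since $xy\in H \iff y\in x^{-1}H = Hx^{-1}$, using normality of $H$. Hence the neighbourhood of each vertex is controlled by a single coset, every connected component is a union of cosets of $H$, and (as $|Hx|=t$ for all $x$) the whole size bookkeeping reduces to deciding which cosets merge. I would organise the argument around the dichotomy of Lemmas~\ref{lem-1} and \ref{lem-1.2}, according to whether $x^{2}\in H$ or $x^{2}\notin H$.

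For part~\ref{s-th2-1}, first take $x$ with $x^{2}\in H$. Then $Hx^{-1}=Hx$ by Lemma~\ref{lem-1}, so the neighbours of any $u\in Hx$ again lie in $Hx$; a direct computation $uv=(h_{1}x)(h_{2}x)=h_{1}h_{2}'x^{2}\in H$ (normality gives $xh_{2}=h_{2}'x$) shows any two distinct elements of $Hx$ are adjacent, so the component induced by $Hx$ is $K_{t}$. Next take $x$ with $x^{2}\notin H$, so $Hx\neq Hx^{-1}$. The same computation shows $uv\notin H$ whenever $u,v$ lie in one coset, so neither $Hx$ nor $Hx^{-1}$ carries an internal edge; for $u\in Hx$, $v\in Hx^{-1}$ one gets $uv\in H$, so every cross pair is adjacent, and by Lemma~\ref{lem-1.2} the set $Hx\cup Hx^{-1}$ is inverse-closed, confirming it is a full component and hence a $K_{t,t}$. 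Since the cosets partition $G$ and $Hx\mapsto Hx^{-1}$ is a fixed-point-free involution on the cosets with $x^{2}\notin H$, this exhausts all components.

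For part~\ref{s-th2-2}, I would first record that $\Gamma_{G,H}$ is obtained from $\Gamma_{G,H}^{+}$ by deleting exactly the edges $\{u,u^{-1}\}$ with $u\neq u^{-1}$, these being the only edges with $uv=e\in H$ and hence the only ones failing $uv\in H\setminus\{e\}$. In a complete bipartite component on $Hx\cup Hx^{-1}$ the map $u\mapsto u^{-1}$ is a bijection $Hx\to Hx^{-1}$ with no fixed point (Lemma~\ref{lem-1.2} forbids involutions here), so the deleted edges form a perfect matching. In a complete component on $Hx$ with $x^{2}\in H$ the inverse of $u$ again lies in $Hx$, so the deleted edges are the pairs $\{u,u^{-1}\}$ internal to $Hx$; these constitute a matching that saturates every vertex except the fixed points of inversion, i.e.\ the $u\in Hx$ with $u^{2}=e$.

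The main obstacle is the final identification: that the vertices left untouched by this matching — equivalently, the vertices of a complete component still adjacent to all others in $\Gamma_{G,H}$ — are precisely the elements of $G^{2}$ in the coset. My plan is to isolate the clean claim that $u$ retains all its edges iff $u=u^{-1}$, and then to match the inversion-fixed set $\{u\in Hx: u^{2}=e\}$ against $G^{2}\cap Hx$. This is the delicate step, since being a square and being an involution are a priori unrelated conditions; I would pin it down by analysing how squaring interacts with the coset $Hx$ (in the spirit of Lemma~\ref{o-og}) and by first testing the cases $t=2$ and $x\in H$ versus $x\notin H$ to confirm the intended characterisation before committing to a general argument.
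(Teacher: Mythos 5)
Your reduction of everything to the single observation $N_{\Gamma_{G,H}^{+}}(u)\subseteq Hu^{-1}$ is exactly the right engine, and it is in effect all the paper itself offers: Theorem~\ref{s-th2} is stated without proof, as a consequence of Lemmas~\ref{lem-1} and \ref{lem-1.2} and by analogy with the abelian case of Cameron et al. Your treatment of part (i), and of the bipartite case of part (ii) (the deleted edges are exactly the pairs $\{u,u^{-1}\}$ with $u\neq u^{-1}$, and on $Hx\cup Hx^{-1}$ inversion is a fixed-point-free bijection between the two sides, hence a perfect matching), is complete and correct.

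The step you flag as delicate is not merely delicate: it cannot be carried out, because the identification of the uncovered vertices with $G^{2}\cap Hx$ is false as literally stated. What your argument actually proves is that in a complete component on $Hx$ the deleted matching covers exactly the $u\in Hx$ with $u^{2}\neq e$, so the vertices adjacent to every other vertex are exactly those with $u^{2}=e$ (the involutions of $Hx$, together with $e$ when $Hx=H$). This set need not coincide with $G^{2}\cap Hx=\{g^{2}:g\in G\}\cap Hx$ in either direction. For instance, take $G=\mathbb{Z}_{16}$ and $H=\langle 2\rangle$; the coset $H$ is a complete component and $2\in G^{2}$, yet $2+14=0$, so the edge $\{2,14\}$ is deleted and $2$ is not adjacent to every other vertex. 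Conversely, in $G=\mathbb{Z}_{2}^{3}$ with $H=\mathbb{Z}_{2}^{2}\times\{0\}$ and $x=(0,0,1)$, every element of $H+x$ is an involution, so the deleted matching is empty, while $G^{2}\cap(H+x)=\emptyset$ would force it to be perfect. So do not try to match the inversion-fixed set against $G^{2}\cap Hx$ "in the spirit of Lemma~\ref{o-og}"; instead state and prove the correct version with the condition $u^{2}=e$. That is the version the paper actually invokes later (e.g.\ in the proof of Theorem~\ref{thm-1}, where the matching is perfect precisely because $Hx$ contains neither $e$ nor an involution), so nothing downstream is affected, but the clause "other than elements of $G^{2}$" and the final sentence of part (ii) should be read, or corrected, accordingly.
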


\section{Perfect codes of $\Gamma_{G,H}$}

A {\em right transversal} (resp. {\em left transversal}) of a subgroup $H$ in $G$ is defined as a subset of $G$ which contains exactly one element in each right coset (resp. left coset) of $H$ in $G$. In an abelian group, every right coset of any subgroup is also a left coset of the subgroup, for the sake of simplicity, we then use the term ``transversal'' to substitute for ``right transversal'' or ``left transversal''.

The main result of this section is the following theorem which gives a necessary and sufficient condition for which normal subgroups $H$ of $G$ satisfy that $\Gamma_{G,H}$ admits a perfect code.

\begin{thm}\label{thm-1}
Let $H$ be a normal subgroup of $G$. Then $\Gamma_{G,H}$ admits a perfect code if and only if one of the following holds:
\begin{enumerate}
\item\label{thm-1-1} $H=\{e\}$;

\item\label{thm-1-2} $|H|=2$;

\item\label{thm-1-3} $Hx$ has an involution for each $x\in G\setminus H$ with $x^2\in H$.
\end{enumerate}
\end{thm}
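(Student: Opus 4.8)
The plan is to exploit the component structure of $\Gamma_{G,H}$ furnished by Theorem~\ref{s-th2}. Since a perfect code of a disjoint union of graphs is exactly a union of perfect codes of the components (closed neighbourhoods never meet across components), $\Gamma_{G,H}$ admits a perfect code if and only if every connected component does. When $H=\{e\}$ the graph is edgeless by Theorem~\ref{s-th1}\ref{s-th1-1}, so the whole vertex set is a perfect code and \ref{thm-1-1} holds; hence I may assume $t=|H|\ge 2$ and examine the two kinds of components separately.

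For a component of the form $K_{t,t}$ (arising from a coset pair $Hx\cup Hx^{-1}$ with $x^2\notin H$, with the matching $\{y,y^{-1}\}$ deleted), I will show a perfect code always exists by exhibiting $C=\{y,y^{-1}\}$ for any $y$ in the component. Using normality, $yy^{-1}=e\notin H\setminus\{e\}$ shows $C$ is independent, while a direct computation gives $N[y]=\{y\}\cup(Hx^{-1}\setminus\{y^{-1}\})$ and $N[y^{-1}]=\{y^{-1}\}\cup(Hx\setminus\{y\})$; these two closed neighbourhoods are disjoint and cover $Hx\cup Hx^{-1}$, so $C$ is perfect. Thus complete bipartite components impose no restriction.

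The crucial case is a component on a single coset $Hx$ with $x^2\in H$, which by Theorem~\ref{s-th2}\ref{s-th2-2} is $K_t$ with the matching pairing each element with its distinct inverse deleted; by Lemma~\ref{lem-1} this coset is inverse-closed, so a vertex $a$ is universal in its component precisely when its only possible non-neighbour $a^{-1}$ satisfies $a^{-1}=a$, i.e. precisely for the self-inverse elements ($e$ when $x\in H$, and the involutions when $x\notin H$). Any such universal vertex $u$ gives a perfect code $\{u\}$, so the identity coset $H$ (via $\{e\}$) and every coset $Hx$ containing an involution carry a perfect code. It remains to treat a coset $Hx$ with $x\notin H$, $x^2\in H$ and no involution; then $t$ is even and the component is $K_t$ minus a perfect matching (a cocktail-party graph). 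Here I will argue that a perfect code exists if and only if $t=2$: for $t=2$ the component is two isolated vertices, which form a perfect code, whereas for $t\ge 4$ every independent set has size at most $2$, a single vertex dominates only $t-1<t$ vertices, and the only independent pair $\{a,a^{-1}\}$ has $N[a]\cap N[a^{-1}]=Hx\setminus\{a,a^{-1}\}\neq\varnothing$, so no perfect code exists.

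Combining the three cases, $\Gamma_{G,H}$ admits a perfect code precisely when either $t=2$ (so every such coset is covered regardless of involutions) or every coset $Hx$ with $x\in G\setminus H$ and $x^2\in H$ contains an involution; together with the edgeless case $H=\{e\}$ this yields conditions \ref{thm-1-1}--\ref{thm-1-3}. I expect the main obstacle to be the no-involution coset: one must pin down exactly which vertices survive as universal vertices in the $K_t$ component (the self-inverse ones, via Lemma~\ref{lem-1}) and then carry out the counting that rules out a perfect code in the cocktail-party graph for $t\ge 4$ while allowing it for $t=2$. Care is also needed to confirm that the $t=2$ possibility is genuinely separate from \ref{thm-1-3}, since a coset of a subgroup of order $2$ (for example $\{1,3\}$ in $\mathbb{Z}_4$) may contain no involution yet still yield a perfect code.
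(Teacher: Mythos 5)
Your proof is correct and follows essentially the same route as the paper: decompose $\Gamma_{G,H}$ into the components given by Theorem~\ref{s-th2}, use $\{y,y^{-1}\}$ for the bipartite components and a self-inverse (universal) vertex for the complete-type components, and observe that an involution-free coset $Hx$ with $x^2\in H$ yields a cocktail-party graph admitting a perfect code only when $|H|=2$. The only difference is cosmetic (you argue component-by-component where the paper assembles one global code from a transversal), and you actually spell out the nonexistence argument for $K_t$ minus a perfect matching with $t\ge 4$, which the paper merely asserts.
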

\begin{proof}
We first prove the necessity. Assume that $\Gamma_{G,H}$ admits a perfect code. Suppose, for a contradiction, that $|H|>2$ and $Hx$ has no involutions for some $x\in G\setminus H$ with $x^2\in H$. In view of Lemma~\ref{lem-1}, $Hx$ is inverse-closed. Since $Hx$ does not contain an involution and the identity element $e$, $|Hx|$ is even. Now Theorem~\ref{s-th2} implies that the subgraph of $\Gamma_{G,H}$ induced by $Hx$ is  a connected component of $\Gamma_{G,H}$ and obtained by deleting a perfect matching from a complete graph of size $|H|$. Since $|H|\ge 4$, the connected component induced by $Hx$ does not admit a perfect code. It follows that $\Gamma_{G,H}$ has no perfect codes, a contradiction.

Next we prove the suffciency. If \ref{thm-1-1} holds, by Theorem \ref{s-th1} \ref{s-th1-1}, then $\Gamma_{G,H}$ admits a perfect code. Suppose that \ref{thm-1-2} holds. It follows that every vertex in $\Gamma_{G,H}$ has degree $1$ or $0$. Thus, $\Gamma_{G,H}$ is a union of some independent edges and isolated vertices. As a result, $\Gamma_{G,H}$ admits a perfect code.

Now assume that \ref{thm-1-3} holds. Let $\{x_1,x_2,\ldots,x_k\}$ be a right transversal of $H$ in $G$ with $k\geq1$. Without loss of generality, we may assume $x_1=e$, $x_i^2\in H$ for $2\le i \le t$, and $x_j^2\notin H$ for $t+1\le j \le k$, where $1\leq t\leq k$.

Since $x_i^2\in H$ for $1\leq i\leq t$, from Theorem~\ref{s-th2}, the subgraph induced by $Hx_i$ is a connected component of $\Gamma_{G,H}$. Since $x_i\in G\setminus H$ for $2\leq i\leq t$, $Hx_i$ has at least an involution, say $u_i$.

Since $x_j^2\notin H$ for $t+1\le j \le k$, from Lemma~\ref{lem-1}, we have $Hx_j\ne Hx_j^{-1}$, and so $k-t$ is even. By Theorem~\ref{s-th2},
the subgraph induced by $Hx_j\cup Hx_j^{-1}$ is a connected component of $\Gamma_{G,H}$. Note that $x_j$ is not an involution. Then $$N_{\Gamma_{G,H}}(x_j)=\{hx_j^{-1}:h\in H\setminus{\{e\}}\},
~~N_{\Gamma_{G,H}}(x_j^{-1})=\{hx_j:h\in H\setminus{\{e\}}\}.$$

Since $k-t$ is even and $Hx_j\ne Hx_j^{-1}$ for $t+1\le j \le k$, we may assume $k-t=2m$ and $Hx_{t+l}^{-1}=Hx_{t+m+l}$ for every $1\le l \le m$. Let
$$
C=\{e,u_2,\ldots,u_t,x_{t+1},x_{t+1}^{-1}, \ldots, x_{t+m},x_{t+m}^{-1}\}.
$$
Then by Theorem~\ref{s-th2}, it is easy to see that $C$ is a perfect code of $\Gamma_{G,H}$.
\end{proof}

\begin{cor}\label{cor-2}
Let $H$ be a normal subgroup of $G$.
If $|H|$ is an odd integer, then $\Gamma_{G,H}$ admits a perfect code.
\end{cor}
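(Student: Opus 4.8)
The plan is to derive Corollary~\ref{cor-2} directly from Theorem~\ref{thm-1} by showing that when $|H|$ is odd, condition~\ref{thm-1-3} is satisfied vacuously. The key observation is that if $|H|$ is odd, then no coset $Hx$ can contain an involution, so the premise ``$x^2 \in H$ for some $x \in G \setminus H$'' never produces a genuine obstruction, and in fact we will see the case $x^2 \in H$ with $x \in G\setminus H$ cannot occur at all in a way that forces deletion of a large perfect matching.

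First I would handle the trivial cases. If $H = \{e\}$ or $|H| = 2$, then Theorem~\ref{thm-1}\ref{thm-1-1} or \ref{thm-1-2} applies immediately; but since $|H|$ is odd, $|H| = 2$ is impossible, and $|H| = 1$ gives $H = \{e\}$, covered by case~\ref{thm-1-1}. So the substantive content is the case $|H| \geq 3$ odd, where I would verify condition~\ref{thm-1-3}: for every $x \in G \setminus H$ with $x^2 \in H$, the coset $Hx$ must contain an involution.

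The heart of the argument is the following. Take any $x \in G \setminus H$ with $x^2 \in H$. Since $|H|$ is odd and $x^2 \in H$, I would argue that $Hx$ contains an involution by a counting/parity argument restricted to the coset. By Lemma~\ref{lem-1}, $Hx$ is inverse-closed, so the map $y \mapsto y^{-1}$ is an involution on the set $Hx$. The fixed points of this map are exactly the involutions in $Hx$ together with any identity-like element, but $e \notin Hx$ since $x \notin H$. Hence the elements of $Hx$ that are not involutions pair up into two-element orbits $\{y, y^{-1}\}$, so the number of involutions in $Hx$ has the same parity as $|Hx| = |H|$, which is odd. Therefore the number of involutions in $Hx$ is odd, in particular nonzero, so $Hx$ contains an involution. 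This establishes condition~\ref{thm-1-3}, and Theorem~\ref{thm-1} then yields that $\Gamma_{G,H}$ admits a perfect code.

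The main obstacle I anticipate is making the parity bookkeeping on the coset $Hx$ airtight: I must be sure that $e \notin Hx$ (which follows from $x \notin H$) so that the inversion map on $Hx$ has no ``identity'' fixed point, and that its only fixed points are genuine involutions. Once inversion is confirmed to be a well-defined involution of the finite odd-sized set $Hx$ with fixed points exactly the involutions lying in $Hx$, the parity conclusion is forced and the rest is a direct appeal to Theorem~\ref{thm-1}. An alternative, perhaps cleaner, route avoids coset-level counting entirely: if $|H|$ is odd then $G$ has odd order whenever we only care about cosets with $x^2\in H$ — but this is not generally true for the whole group, so I would keep the localized parity argument on $Hx$ rather than assuming global odd order.
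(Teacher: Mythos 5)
Your argument is correct and is essentially the paper's own proof: the paper likewise reduces to condition~\ref{thm-1-3} of Theorem~\ref{thm-1}, invokes Lemma~\ref{lem-1} to see that $Hx$ is inverse-closed, and concludes from $e\notin Hx$ and $|Hx|$ odd that $Hx$ contains an involution, which is exactly your fixed-point parity count for the inversion map made explicit. One caveat: your opening paragraph asserts that condition~\ref{thm-1-3} holds ``vacuously'' and that ``no coset $Hx$ can contain an involution,'' which is both false and contradicted by the body of your own proof (which correctly shows $Hx$ \emph{must} contain an involution); delete that framing and keep the parity argument.
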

\begin{proof}
If $H=\{e\}$, from Theorem~\ref{thm-1} \ref{thm-1-1}, then the desired result holds. Now assume that $|H|\geq3$. By Theorem~\ref{thm-1},
we only need to consider the case that there exists $x\in G\setminus H$ with $x^2\in H$. It suffices to show that $Hx$ has an involution. Now it follows from Lemma~\ref{lem-1} that $Hx$ is inverse-closed. Since $e\notin Hx$ and $|Hx|$ is odd, $Hx$ has at least an involution, as desired.
\end{proof}

\begin{cor}\label{cor-3}
If $G$ has odd order, then $\Gamma_{G,H}$ admits a perfect code for any normal subgroup $H$ of $G$.
\end{cor}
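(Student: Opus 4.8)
The plan is to reduce this statement directly to Corollary~\ref{cor-2}, which already resolves the case of a normal subgroup whose order is odd. The only supplementary ingredient needed is the elementary observation that in an odd-order group every subgroup also has odd order.

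First I would invoke Lagrange's theorem. Since $H$ is a subgroup of $G$, the order $|H|$ divides $|G|$. As $|G|$ is assumed to be odd, it follows immediately that $|H|$ is odd as well. This conclusion is uniform over all choices of $H$: every subgroup of $G$ (and in particular every normal subgroup) has odd order, so the hypothesis ``$|H|$ is an odd integer'' of Corollary~\ref{cor-2} is automatically satisfied for each normal subgroup $H$ of $G$.

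Having verified the hypothesis, I would then apply Corollary~\ref{cor-2} directly to conclude that $\Gamma_{G,H}$ admits a perfect code. Since $H$ was an arbitrary normal subgroup of $G$, this establishes the claim for all normal subgroups simultaneously.

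I do not expect any genuine obstacle here: the statement is an immediate specialization of Corollary~\ref{cor-2}, the substantive work having already been carried out in the necessary-and-sufficient characterization of Theorem~\ref{thm-1} and its odd-order consequence. The sole point to record is the passage via Lagrange's theorem from ``$|G|$ odd'' to ``$|H|$ odd for every subgroup $H$'', which is what allows Corollary~\ref{cor-2} to be applied without any further case analysis.
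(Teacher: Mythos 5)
Your proposal is correct and matches the paper's intent exactly: the paper states this corollary without proof precisely because it follows at once from Corollary~\ref{cor-2} via Lagrange's theorem, which is the reduction you carry out. Nothing further is needed.
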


\subsection{Abelian groups}

In this subsection, we always assume that $A$ is an abelian, additively written, group with identity $0$. Denote by $A_2$ the Sylow $2$-subgroup of $A$.

\begin{prop}\label{p-abg}
Let $H$ be a subgroup of $A$. If $\Gamma_{A,H}$ admits a perfect code, then $\Gamma_{A_2,H_2}$ admits a perfect code.
\end{prop}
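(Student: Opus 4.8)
The plan is to reduce the whole statement to the combinatorial characterisation in Theorem~\ref{thm-1}. Since $A$ is a finite abelian group, I would first fix the primary picture: $A_2$ is the Sylow $2$-subgroup, every element of $2$-power order lies in $A_2$, and the Sylow $2$-subgroup of $H$ is exactly $H_2=H\cap A_2$. Because $\Gamma_{A,H}$ admits a perfect code, Theorem~\ref{thm-1} guarantees that one of its three conditions holds for the pair $(A,H)$: either (i) $H=\{0\}$, or (ii) $|H|=2$, or (iii) every coset $H+x$ with $x\in A\setminus H$ and $2x\in H$ contains an involution. The strategy is then to verify, case by case, that the corresponding condition of Theorem~\ref{thm-1} is inherited by $(A_2,H_2)$, which by that same theorem gives a perfect code in $\Gamma_{A_2,H_2}$.

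The first two cases are immediate and I would dispatch them quickly. If $H=\{0\}$ then $H_2=H\cap A_2=\{0\}$, so $(A_2,H_2)$ satisfies condition (i). If $|H|=2$ then $H$ is generated by an involution, which necessarily lies in $A_2$, so $H\le A_2$ and hence $H_2=H$ has order $2$, i.e.\ $(A_2,H_2)$ satisfies condition (ii). In either situation Theorem~\ref{thm-1} yields the desired perfect code.

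The substantive case is (iii), and here I aim to show condition (iii) itself transfers. Fix any $y\in A_2\setminus H_2$ with $2y\in H_2$; the goal is to produce an involution inside $H_2+y$. First I would check that $y$ is an admissible element for the ambient pair $(A,H)$: since $y\in H$ would force $y\in H\cap A_2=H_2$, we have $y\in A\setminus H$, and moreover $2y\in H_2\subseteq H$; hence applying (iii) in $A$ gives an involution $v\in H+y$, say $v=h+y$ with $h\in H$. The crux of the argument — and the one place the choice of the Sylow $2$-subgroup really pays off — is relocating this involution from $H+y$ into the smaller coset $H_2+y$: an involution has $2$-power order, so $v\in A_2$, and since $y\in A_2$ and $A_2$ is a subgroup, $h=v-y\in A_2$, whence $h\in H\cap A_2=H_2$. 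Thus $v\in H_2+y$ is the required involution, $(A_2,H_2)$ satisfies condition (iii), and Theorem~\ref{thm-1} finishes the proof. I do not expect a serious obstacle beyond this transfer step; the single point to state carefully at the outset is the identity $H_2=H\cap A_2$, since both the ``$y\notin H$'' verification and the final ``$h\in H_2$'' conclusion rest on it.
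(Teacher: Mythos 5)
Your proof is correct and follows essentially the same route as the paper: both arguments reduce everything to Theorem~\ref{thm-1} and transfer condition (iii) by observing that an involution must lie in the Sylow $2$-subgroup. The only cosmetic difference is that you work with $H_2=H\cap A_2$ directly, whereas the paper phrases the same transfer via the decomposition $A=A_2\times Q$, $H=H_2\times Q_1$ and the odd order of $Q_1$.
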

\begin{proof}
If $|H_2|\le 2$, by Theorem~\ref{thm-1} \ref{thm-1-1} and \ref{thm-1-2}, then the desired result holds. We only need to consider the case that $|H_2|\geq3$ and there exists $x\in A_2\setminus H_2$ such that $2x\in H_2$. Let $$A=A_2\times Q,~~H=H_2\times Q_1,$$ where $Q$ is an abelian group of odd order and $Q_1\le Q$.
Then $(x,e')\in (A_2\times Q)\setminus (H_2\times Q_1)$ where $e'$ is the identity element of $Q$, and $2(x,e')\in (H_2\times Q_1)$. Since $\Gamma_{A,H}$ admits a perfect code, from Theorem~\ref{thm-1}, there is an involution in $(H_2\times Q_1)+(x,e')$,  say $(h+x,y'+e')$ for some $h\in H_2$ and $y'\in Q_1$. It means that $(2(h+x),2y')=(e,e')$ where $e$ is the identity element of $A_2$. The fact that $Q_1$ has odd order implies $y'=e'$. Since $(h+x,e')$ is an involution, $h+x$ is an involution belonging to $H_2+x$. By Theorem~\ref{thm-1} \ref{thm-1-3} again, $\Gamma_{A_2,H_2}$ admits a perfect code, as desired.
\end{proof}

\begin{prop}\label{p2-abg}
Let $H$ be a subgroup of $A$ with $|H_2|\ne 2$. If $\Gamma_{A_2,H_2}$ admits a perfect code, then $\Gamma_{A,H}$ admits a perfect code.
\end{prop}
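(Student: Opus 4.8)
The plan is to verify condition \ref{thm-1-3} of Theorem~\ref{thm-1} for $\Gamma_{A,H}$, exploiting the product decomposition $A=A_2\times Q$ and $H=H_2\times Q_1$ used in Proposition~\ref{p-abg}, where $Q$ has odd order and $Q_1\le Q$. First I would dispose of the degenerate case $|H_2|=1$: then $H=Q_1$ has odd order, so Corollary~\ref{cor-2} immediately yields a perfect code in $\Gamma_{A,H}$. Since $|H_2|$ is a power of $2$ and $|H_2|\ne 2$, the remaining case is $|H_2|\ge 4$, which is the heart of the argument.

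So assume $|H_2|\ge 4$. I would take an arbitrary $x=(a,q)\in A\setminus H$ with $2x\in H$ and aim to produce an involution in $H+x$. From $2x=(2a,2q)\in H_2\times Q_1$ we get $2a\in H_2$ and $2q\in Q_1$; since $Q_1$ has odd order, Lemma~\ref{o-og} (applied inside $Q$) forces $q\in Q_1$, and hence $H+x=(H_2+a)\times Q_1$. Moreover $a\notin H_2$, for otherwise $x=(a,q)\in H_2\times Q_1=H$, contradicting $x\notin H$. Thus $a\in A_2\setminus H_2$ with $2a\in H_2$.

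Now I would invoke the hypothesis that $\Gamma_{A_2,H_2}$ admits a perfect code. Because $|H_2|\ge 4$, conditions \ref{thm-1-1} and \ref{thm-1-2} of Theorem~\ref{thm-1} fail for $\Gamma_{A_2,H_2}$, so condition \ref{thm-1-3} must hold; applied to the element $a$, it gives an involution $b\in H_2+a$. Then $(b,0)$ is an element of $(H_2+a)\times Q_1=H+x$ with $2(b,0)=0$ and $(b,0)\ne 0$, i.e.\ an involution in $H+x$. Since $x$ was arbitrary, condition \ref{thm-1-3} holds for $\Gamma_{A,H}$, and Theorem~\ref{thm-1} yields the desired perfect code.

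The only genuinely delicate point is the interaction with the odd part: one must check both that the $Q$-component $q$ of $x$ already lies in $Q_1$ (so that the coset splits as the product $(H_2+a)\times Q_1$) and that every involution of $H+x$ is forced to have trivial $Q$-component, which is exactly why the odd order of $Q_1$ lets the problem reduce cleanly to finding an involution in $H_2+a$. Once this reduction is in place, the conclusion follows directly from the assumed perfect code on $\Gamma_{A_2,H_2}$ via Theorem~\ref{thm-1}.
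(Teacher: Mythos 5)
Your proposal is correct and follows essentially the same route as the paper: reduce to the case $|H_2|\ge 4$, use Lemma~\ref{o-og} to force the odd component into $Q_1$, pull an involution from $H_2+a$ via Theorem~\ref{thm-1}\,\ref{thm-1-3} applied to $\Gamma_{A_2,H_2}$, and lift it to $H+x$. The only cosmetic difference is that you write the lifted involution as $(b,0)\in(H_2+a)\times Q_1$, whereas the paper exhibits it as $(h,-y)+(x,y)$; these are the same element.
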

\begin{proof}
If $|H_2|=1$, then $|H|$ is odd, and $\Gamma_{A,H}$ admits a perfect code from Corollary~\ref{cor-2}, as desired.  Let $$A=A_2\times Q,~~H=H_2\times Q_1,$$ where $Q$ is an abelian group of odd order and $Q_1\le Q$. By Theorem \ref{thm-1}, we only need to consider the case that $|H_2|\geq3$ and $(2x,2y)\in H_2\times Q_1$ for some $(x,y)\in (A_2\times Q)\setminus (H_2\times Q_1)$.  It follows from Lemma~\ref{o-og} that $y\in Q_1$. As a result, we have $x\notin H_2$. Thus, we have that $x\in A_2\setminus H_2$ and $2x\in H_2$. Now Theorem~\ref{thm-1} implies that $H_2+x$ has an involution, say $h+x$ for some $h\in H_2$.
Since $y\in Q_1$, we have $(h,-y)\in H_2\times Q_1$, and so $(h+x,e')=(h,-y)+(x,y)\in(H_2\times Q_1)+(x,y)$ is an involution, where $e'$ is the identity element of $Q$.  Theorem~\ref{thm-1} \ref{thm-1-3} implies that $\Gamma_{A,H}$ admits a perfect code, as desired.
\end{proof}

The next corollary follows immediately from Propositions \ref{p-abg} and \ref{p2-abg}.

\begin{cor}
Let $H$ be a subgroup of $A$ with $|H_2|\ne 2$. Then $\Gamma_{A_2,H_2}$ admits a perfect code if and only if $\Gamma_{A,H}$ admits a perfect code.
\end{cor}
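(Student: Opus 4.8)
The plan is to derive the biconditional purely by splitting it into its two implications and matching each one to a proposition already proved, since no fresh argument is required: together, Propositions~\ref{p-abg} and~\ref{p2-abg} supply both directions under the standing hypothesis $|H_2|\neq 2$. So the proof will amount to an assembly of these two statements, with attention paid only to which hypotheses each one consumes.

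First I would dispatch the implication ``$\Gamma_{A_2,H_2}$ admits a perfect code $\Rightarrow$ $\Gamma_{A,H}$ admits a perfect code.'' This is exactly the content of Proposition~\ref{p2-abg}, whose hypothesis is precisely $|H_2|\neq 2$; hence, under the assumption of the corollary, this direction follows verbatim. Next I would treat the converse ``$\Gamma_{A,H}$ admits a perfect code $\Rightarrow$ $\Gamma_{A_2,H_2}$ admits a perfect code,'' which I would obtain from Proposition~\ref{p-abg}. Note that Proposition~\ref{p-abg} is stated for an arbitrary subgroup $H$ of $A$ with no restriction on $|H_2|$, so in particular it applies when $|H_2|\neq 2$. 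Combining the two implications yields the asserted equivalence, and the proof is complete.

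Because the result is a direct corollary, there is no genuine analytic obstacle; the one point deserving care is the bookkeeping of where the hypothesis $|H_2|\neq 2$ is actually used, and I would make this explicit to justify its appearance. It is needed only for the forward implication via Proposition~\ref{p2-abg}. Indeed, when $|H_2|=2$ the graph $\Gamma_{A_2,H_2}$ always admits a perfect code by Theorem~\ref{thm-1}\,\ref{thm-1-2}, whereas $\Gamma_{A,H}$ need not: writing $A=A_2\times Q$ and $H=H_2\times Q_1$ with $Q$ of odd order, one can exhibit a coset $H+(x,y)$ (for instance with $A_2=\mathbb{Z}_4$, $H_2=\{0,2\}$, $Q_1$ nontrivial, and $x\in A_2$ satisfying $2x$ equal to the nonzero element of $H_2$) that contains no involution, so that Theorem~\ref{thm-1}\,\ref{thm-1-3} fails for $\Gamma_{A,H}$. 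Thus the equivalence would break down at $|H_2|=2$, which is exactly why the hypothesis is imposed. The reverse implication (Proposition~\ref{p-abg}) carries no such restriction, accounting for the asymmetry between the two propositions.
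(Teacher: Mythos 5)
Your proof is correct and coincides with the paper's, which likewise obtains the corollary immediately by combining Proposition~\ref{p2-abg} (forward direction, where the hypothesis $|H_2|\neq 2$ is consumed) with Proposition~\ref{p-abg} (converse, unrestricted). Your added remark on why the hypothesis is indispensable mirrors the paper's own counterexample with $A=\mathbb{Z}_4\times\mathbb{Z}_3\times\mathbb{Z}_3$ and $H\cong\mathbb{Z}_2\times\mathbb{Z}_3$.
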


 For the cyclic group $\mathbb{Z}_n$ of order $n$, we say  $\mathbb{Z}_n=\{0,1,2,\ldots,n-1\}$, where $0$ is its identity element.

\begin{remark}
If we delete the condition ``$|H_2|\ne 2$" in  Proposition~\ref{p2-abg}, then it is false. For example, let
$$
A=\mathbb{Z}_4\times \mathbb{Z}_3\times \mathbb{Z}_3,
~~H=\{(0,1,0),(0,2,0),(0,0,0),(2,1,0),(2,2,0),(2,0,0)\}.
$$
Then $H\le A$, $H\cong \mathbb{Z}_2\times \mathbb{Z}_3$ and $|H_2|=2$. By Theorem~\ref{thm-1}, $\Gamma_{A_2,H_2}$ admits a perfect code. However, taking $(1,0,0)\in A\setminus H$ with $2(1,0,0)\in H$, we have that $H+(1,0,0)$ has no involutions, and by Theorem~\ref{thm-1}, $\Gamma_{A,H}$ does not admit a perfect code.
\end{remark}

Let $H$ be an abelian $2$-group of order at least $2$. Then
by fundamental theorem of finite abelian groups, we may assume that
\begin{align}\label{abe-2-fg}
H=\mathbb{Z}_{2^{n_1}}\times \mathbb{Z}_{2^{n_2}} \times \cdots \times \mathbb{Z}_{2^{n_k}},
\end{align}
where $1\le n_i\le n_j$ for all $1\le i \le j\le k$.
Note that the abelian group  as presented in \eqref{abe-2-fg} is non-cyclic if and only if $k\ge 2$.

The following result determines all subgroups of a non-cyclic abelian $2$-group such that a subgroup sum graph of this non-cyclic abelian $2$-group group with respect to such a subgroup admits a perfect code.

\begin{thm}\label{ncg-2g}
Let $H$ be a non-cyclic abelian $2$-group as presented in \eqref{abe-2-fg} and $K$ a subgroup of $H$ with $|K|\geq3$. Then  $\Gamma_{H,K}$ admits a perfect code if and only if
$K$ has the following form:
\begin{align}\label{pc-2-ag}
K=K_1\times K_2\times\cdots \times K_k,
\end{align}
where $K_i$ is a trivial subgroup of $\mathbb{Z}_{2^{n_i}}$ for $1\leq i\leq k$.
\end{thm}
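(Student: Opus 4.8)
The plan is to funnel everything through the involution criterion of Theorem~\ref{thm-1}, applied with the ambient group $H$ in the role of ``$G$'' and $K$ in the role of the normal subgroup. Since $K$ lies in a $2$-group and $|K|\ge 3$, in fact $|K|\ge 4$, so options \ref{thm-1-1} and \ref{thm-1-2} are unavailable and the existence of a perfect code is decided entirely by \ref{thm-1-3}: $\Gamma_{H,K}$ admits a perfect code if and only if every coset $K+x$ with $x\notin K$ and $2x\in K$ contains an involution of $H$. Writing $T=\{y\in H:2y=0\}$ for the set of involutions of $H$ together with $0$, the cosets $K+x$ with $2x\in K$ are exactly the preimages under $H\to H/K$ of the elements of order dividing $2$, and such a coset meets $T$ precisely when its image in $H/K$ lies in the image of $T$. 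Thus the criterion becomes: $H\to H/K$ carries $T$ onto every element of order dividing $2$ in $H/K$. Comparing the kernels and cokernels of multiplication by $2$ on $0\to K\to H\to H/K\to 0$ (the snake lemma) rephrases this one last time as the purity-type identity $K\cap 2H=2K$, which is the form I would carry into the two implications.

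Granting this reformulation, the forward implication is purely formal. If $K=K_1\times\cdots\times K_k$ with each $K_i$ equal to $\{0\}$ or to all of $\mathbb{Z}_{2^{n_i}}$, put $S=\{i:K_i=\mathbb{Z}_{2^{n_i}}\}$; then both $2K$ and $K\cap 2H$ are computed coordinatewise, equalling $2\mathbb{Z}_{2^{n_i}}$ on each factor $i\in S$ and $\{0\}$ on each factor $i\notin S$. Hence $K\cap 2H=2K$, so $\Gamma_{H,K}$ admits a perfect code; equivalently, any $x$ with $2x\in K$ can be corrected one coordinate at a time into an involution of $K+x$.

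The content of the theorem is the converse, which I would attack in contrapositive form: if $K$ does not have the product shape \eqref{pc-2-ag}, then $K\cap 2H\ne 2K$, i.e.\ there is an $x\in H\setminus K$ with $2x\in K$ whose coset $K+x$ avoids $T$. The product shape can fail in two ways: $K$ may fail to split as the internal product of its intersections $K\cap\mathbb{Z}_{2^{n_i}}$ with the standard factors, or it may split but with some $K\cap\mathbb{Z}_{2^{n_i}}$ a proper nonzero subgroup of $\mathbb{Z}_{2^{n_i}}$. To produce a witness I would study how $K$ sits inside $H=\prod_i\mathbb{Z}_{2^{n_i}}$: in the first case some $z\in K$ has entangled coordinates, and I would use a coordinate of largest $2$-height in the support of such a $z$ to manufacture an $x$ whose entire coset $K+x$ misses $T$; in the second I would localise the argument inside the offending factor.

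I expect this converse to be the main obstacle by a wide margin. The two reductions above are essentially bookkeeping, but converting the structural statement ``$K$ is not a product of the standard cyclic factors'' into an explicit coset with no involution requires a delicate height/rank analysis of $K$ inside $H$ --- most naturally an induction on $k$ that peels off one cyclic factor at a time, or a comparison of the relevant $\mathbb{F}_2$-ranks of $K\cap 2H$ and $2K$. The crux is precisely to exclude the scenario in which coordinate adjustments drawn from $K$ always manage to land on an involution, so pinning down exactly which non-product subgroups force a bad coset is where all the genuine difficulty, and all the care, resides.
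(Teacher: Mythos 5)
Your reduction to the involution criterion of Theorem~\ref{thm-1} and the snake-lemma reformulation of that criterion as the purity identity $K\cap 2H=2K$ are both correct, and your sufficiency argument (product form $\Rightarrow$ purity $\Rightarrow$ perfect code) is complete and equivalent in substance to the paper's, which simply exhibits the involution coordinatewise. The genuine gap is the converse: you never prove that a non-product $K$ violates $K\cap 2H=2K$ --- you only announce a plan (``I would use a coordinate of largest $2$-height\dots'', ``most naturally an induction on $k$\dots''). No such argument can be supplied, because the implication you are aiming at is false. Take $H=\mathbb{Z}_2\times\mathbb{Z}_8$ and $K=\langle(1,2)\rangle=\{(0,0),(1,2),(0,4),(1,6)\}\cong\mathbb{Z}_4$. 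Then $|K|=4\ge 3$ and $K$ is not of the form $K_1\times K_2$ with each $K_i$ trivial (such products have order $1$, $2$, $8$ or $16$, never $4$), yet $K\cap 2H=\{(0,0),(0,4)\}=2K$. Checking against Theorem~\ref{thm-1} directly: the elements $x\notin K$ with $2x\in K$ are exactly $(0,2),(0,6),(1,0),(1,4)$, which form the single coset $K+(0,2)$, and this coset contains the involutions $(1,0)$ and $(1,4)$. So $\Gamma_{H,K}$ \emph{does} admit a perfect code while $K$ fails \eqref{pc-2-ag}. Your own reformulation therefore refutes the stated theorem rather than proving it: for $|K|\ge 3$ the correct characterization is precisely your condition $K\cap 2H=2K$, which is strictly weaker than the product shape.

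You should not feel obliged to close this gap, because the paper's own necessity proof breaks at the same point: after writing $K\cong\mathbb{Z}_{2^{m_1}}\times\cdots\times\mathbb{Z}_{2^{m_k}}$ it asserts that every element of $K$ has $l$-th coordinate a multiple of $2^{n_l-m_l}$, which is true for the standard product subgroup $\prod_i\langle 2^{n_i-m_i}\rangle$ but not for a general subgroup such as $\langle(1,2)\rangle$ above; the witness $x$ constructed there then either has $2x\notin K$ or lies in a coset that does contain an involution. The constructive way to repair the result is to replace the conclusion of Theorem~\ref{ncg-2g} by your identity $K\cap 2H=2K$; with that as the target, the two reductions you dismiss as bookkeeping already constitute a complete proof, and the ``hard converse'' you were bracing for disappears.
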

\begin{proof}
Note that $k\ge 2$. We first prove the sufficiency. Now suppose that $K$ is a group having the form \eqref{pc-2-ag}. In the following, we assume that $|K|\geq3$. Without loss of generality, we may assume $K_i=\{0\}$ for $1<i \le t-1$ and $K_j=\mathbb{Z}_{2^{n_j}}$ for $t\le j \le k$, where $1<t \le k$. Suppose that there exists $x\in H\setminus K$ such that $2x\in K$.  Let $x=(b_1,b_2,\cdots,b_k)$. Then $b_i=2^{n_i-1}$ or $0$ for any $1\le i \le t-1$, and there exists at least an index $i\in \{1,\ldots,t-1\}$ such that $b_i=2^{n_i-1}$. Since for any $t\le j \le k$, $(0,\ldots,0,-b_t,-b_{t+1},\ldots,-b_k)\in K$. It follows that $(b_1,\cdots,b_{t-1},0,\cdots,0)\in K+x$ and $(b_1,\cdots,b_{t-1},0,\cdots,0)$ is an involution. Thus, $\Gamma_{H,K}$ admits a perfect code from Theorem~\ref{thm-1} \ref{thm-1-3}.

Now let $K$ be a subgroup of $H$ such that $K$ is not isomorphic to a group having the form \eqref{pc-2-ag}.
Up to isomorphism, let
\begin{align}
K\cong\mathbb{Z}_{2^{m_1}}\times \mathbb{Z}_{2^{m_2}} \times \cdots \times \mathbb{Z}_{2^{m_k}},\nonumber
\end{align}
where for all $1\le i \le k$ and $m_i\le n_i$.
As a result, there exists $l\in \{1,2,\ldots,k\}$ such that $1\le m_l<n_l$. Note that $K\le H$.
It follows that if $(b_1,\ldots,b_l,\ldots,b_k)\in K$, then $b_l=k\cdot2^{n_l-m_l}$ with $0\le k \le 2^{m_l}-1$. Let
$$
x=(\underbrace{0,\cdots,0}_{l-1},2^{n_l-m_l-1},0,\ldots,0).
$$
Then $x\in H\setminus K$ and $2x\in K$. If $2^{n_l-m_l-1}+a\cdot2^{n_l-m_l}\equiv2^{n_l-1}~({\rm mod}~2^{n_l})$ for some $a\in\{0,1\ldots,2^{m_l}-1\}$, then $1+2a\equiv2^{m_l}~({\rm mod}~2^{m_l+1})$, which is impossible. Hence, $2^{n_l-m_l-1}+a\cdot2^{n_l-m_l}\not\equiv 2^{n_l-1}~({\rm mod}~2^{n_l})$ for any $0\le a \le 2^{m_l}-1$.
Also, it is clear that $2^{n_l}\nmid(2^{n_l-m_l-1}+a\cdot2^{n_l-m_l})$ for any $0\le a \le 2^{m_l}-1$.
As a result, $K+x$ has no involutions. By Theorem~\ref{thm-1}, $\Gamma_{H,K}$ has no perfect codes.
\end{proof}

The following result determines all subgroups $H$ of $\mathbb{Z}_n$ such that $\Gamma_{\mathbb{Z}_n,H}$ admits a perfect code.

\begin{thm}\label{th-cg}
Suppose that $H=\langle a\rangle$ is a subgroup of $\mathbb{Z}_n$, where $a$ is the smallest positive integer in $H$. Then $\Gamma_{\mathbb{Z}_n,H}$ admits a perfect code if and only if one of the following holds:
\begin{enumerate}
  \item\label{th-cg-1} $n$ is odd;
  \item\label{th-cg-2} $n$ is even and $o(a)$ is odd;
  \item\label{th-cg-3} $n$ is even and $o(a)=2$;
  \item\label{th-cg-4} $n$ is even, $o(a)\ge 4$ is even and $a$ is odd.
\end{enumerate}
\end{thm}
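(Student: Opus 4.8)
The plan is to read off the answer directly from the criterion of Theorem~\ref{thm-1}, after recording two elementary facts about $\mathbb{Z}_n$. Since $H=\langle a\rangle$ with $a$ the least positive element of $H$, we have $a\mid n$, the subgroup $H$ is exactly the set of multiples of $a$ in $\{0,1,\dots,n-1\}$, and $|H|=o(a)=n/a$. Moreover $\mathbb{Z}_n$ contains an involution if and only if $n$ is even, in which case $n/2$ is the \emph{unique} involution; consequently, for any $x$, the coset $H+x$ contains an involution precisely when $n/2\in H+x$, that is, when $a\mid(n/2-x)$. These two observations turn condition \ref{thm-1-3} of Theorem~\ref{thm-1} into a congruence test, which is all the proof needs.

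For the sufficiency I would dispose of \ref{th-cg-1}--\ref{th-cg-3} at once: if $n$ is odd then Corollary~\ref{cor-3} applies; if $n$ is even and $o(a)=|H|$ is odd then Corollary~\ref{cor-2} applies; and if $o(a)=2$ then $|H|=2$ and Theorem~\ref{thm-1}~\ref{thm-1-2} applies. Case \ref{th-cg-4} is the interesting one, and here I would argue that condition \ref{thm-1-3} holds \emph{vacuously} using only that $a$ is odd: if $x\in\mathbb{Z}_n$ satisfies $2x\in H$, then $a\mid 2x$, and since $\gcd(a,2)=1$ this forces $a\mid x$, i.e.\ $x\in H$. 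Hence there is no $x\in\mathbb{Z}_n\setminus H$ with $2x\in H$, and $\Gamma_{\mathbb{Z}_n,H}$ admits a perfect code by Theorem~\ref{thm-1}~\ref{thm-1-3}.

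For the necessity I would show that if none of \ref{th-cg-1}--\ref{th-cg-4} holds then no perfect code exists. As $H$ is non-trivial we have $o(a)\ge 2$, and the failure of all four conditions leaves exactly one configuration: $n$ even, $o(a)\ge 4$ even, and $a$ even. I then exhibit the single offending coset. Set $x=a/2$; since $0<a/2<a$ we get $x\in\mathbb{Z}_n\setminus H$, while $2x=a\in H$. By the second fact above, $H+x$ contains the involution $n/2$ iff $a\mid(n-a)/2$, i.e.\ iff $2a\mid n-a$; writing $n=a\cdot o(a)$ this is equivalent to $2\mid o(a)-1$, that is, to $o(a)$ being odd. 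Since here $o(a)$ is even, $H+a/2$ has no involution, and because $|H|=o(a)>2$, the necessity direction of Theorem~\ref{thm-1} shows that $\Gamma_{\mathbb{Z}_n,H}$ has no perfect code, completing the equivalence.

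The only genuinely computational step is the reduction $n/2\in H+a/2\iff o(a)\text{ odd}$, which is a short congruence manipulation. The main bookkeeping hurdle is verifying that the complement of \ref{th-cg-1}--\ref{th-cg-4} is exactly the configuration ``$n$ even, $o(a)\ge 4$ even, $a$ even'' and nothing larger, so that the involution test is invoked with $|H|>2$; this size bound is essential, since for $|H|=2$ the clause Theorem~\ref{thm-1}~\ref{thm-1-2} overrides the involution condition and a perfect code exists regardless (this is why the boundary case $o(a)=2$ lands in \ref{th-cg-3} rather than in the failing family).
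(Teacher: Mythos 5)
Your proposal is correct and follows essentially the same route as the paper: sufficiency of \ref{th-cg-1}--\ref{th-cg-3} via Corollaries~\ref{cor-2}, \ref{cor-3} and Theorem~\ref{thm-1}~\ref{thm-1-2}, the vacuous verification of Theorem~\ref{thm-1}~\ref{thm-1-3} when $a$ is odd using $\gcd(a,2)=1$, and the coset $H+(a/2)$ as the obstruction when $a$ is even. The only (immaterial) difference is that you check $n/2\notin H+(a/2)$ by a direct congruence, whereas the paper observes that the unique involution $n/2$ already lies in $H$ and hence in no other coset.
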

\begin{proof}
In view of Corollaries~\ref{cor-2} and \ref{cor-3}, and Theorem~\ref{thm-1} \ref{thm-1-2}, if \ref{th-cg-1}, \ref{th-cg-2} or \ref{th-cg-3} holds, then $\Gamma_{\mathbb{Z}_n,H}$ admits a perfect code. We only need to consider the case that $n$ is even and $o(a)\ge 4$ is even. It suffices to show that $\Gamma_{\mathbb{Z}_n,H}$ admits a perfect code if and only if $a$ is odd.

Suppose that $a$ is odd.
If $H=\mathbb{Z}_n$, by Theorem~\ref{thm-1} \ref{thm-1-3}, then $\Gamma_{\mathbb{Z}_n,H}$ admits a perfect code, as desired. We only need to consider the case that $\langle a\rangle$ is a non-trivial subgroup of $\mathbb{Z}_n$. It follows that $a>1$. Let $i$ be an integer such that $i\in\mathbb{Z}_n\setminus H$. Then $i\equiv j~({\rm mod}~a)$ for some $j\in\{1,2,\ldots,a-1\}$. Therefore, $2i\equiv 2j~({\rm mod}~a)$. Since $a$ is odd, one gets $a\nmid 2i$, and so $2i\notin H$. It follows from Theorem~\ref{thm-1} that $\Gamma_{\mathbb{Z}_n,H}$ admits a perfect code, as desired.

Suppose that $a$ is even. Note that $a\ge 2$. It follows that $a/2\notin H$. Since $\mathbb{Z}_n$ has the unique involution belonging to $H$, $H+(a/2)$ has no involutions. By Theorem~\ref{thm-1}, $\Gamma_{\mathbb{Z}_n,H}$ does not admit a perfect code, as desired.
\end{proof}

\begin{cor}\label{2g-cyc}
Let $K$ be a subgroup of $\mathbb{Z}_{2^k}$, where $k\ge 1$. Then $\Gamma_{\mathbb{Z}_{2^k},K}$ admits a perfect code if and only if $|K|=1,2$ or $2^k$.
\end{cor}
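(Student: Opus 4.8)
The goal is Corollary~\ref{2g-cyc}: classifying subgroups $K$ of $\mathbb{Z}_{2^k}$ for which $\Gamma_{\mathbb{Z}_{2^k},K}$ admits a perfect code, with answer $|K|\in\{1,2,2^k\}$. The plan is to deduce this directly from Theorem~\ref{th-cg}, which already characterizes exactly when $\Gamma_{\mathbb{Z}_n,H}$ admits a perfect code for arbitrary $n$ and $H=\langle a\rangle$. The main task is to specialize $n=2^k$ and translate the four conditions of that theorem into the stated constraint on $|K|$.

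First I would record the relevant structure: every subgroup $K$ of $\mathbb{Z}_{2^k}$ has the form $K=\langle a\rangle$ with $a=2^j$ for some $0\le j\le k$ (taking $a$ to be the least positive element of $K$, as in the hypothesis of Theorem~\ref{th-cg}), so $|K|=o(a)=2^{k-j}$ is always a power of $2$. In particular $n=2^k$ is even (assuming $k\ge 1$), so condition~\ref{th-cg-1} of Theorem~\ref{th-cg} never applies. The three remaining conditions then read as conditions on the power-of-two generator $a=2^j$. The heart of the argument is the case analysis on $j$.

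The case split is short. If $j=k$, then $K=\{0\}$, $|K|=1$, and this is the trivial case $H=\{e\}$ covered by Theorem~\ref{thm-1}\ref{thm-1-1}. If $j=k-1$, then $a=2^{k-1}$ has order $2$, so $|K|=2$, matching condition~\ref{th-cg-3}. If $j=0$, then $a=1$ is odd with $o(a)=2^k\ge 4$ even (here one uses $k\ge 2$; the cases $k=1$ give $|K|\in\{1,2\}=\{1,2^k\}$ and are already handled), so condition~\ref{th-cg-4} applies and a perfect code exists, with $K=\mathbb{Z}_{2^k}$ and $|K|=2^k$. Finally, for $1\le j\le k-2$ one has $a=2^j$ even with $o(a)=2^{k-j}\ge 4$ even; since condition~\ref{th-cg-4} requires $a$ odd and none of \ref{th-cg-1}, \ref{th-cg-2}, \ref{th-cg-3} can hold (as $o(a)=2^{k-j}$ is an even integer at least $4$), Theorem~\ref{th-cg} gives that no perfect code exists. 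Collecting these, a perfect code exists precisely when $j\in\{0,k-1,k\}$, i.e. when $|K|\in\{2^k,2,1\}$.

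No step here presents a genuine obstacle, since Theorem~\ref{th-cg} does all the real work; the only point requiring a little care is keeping the boundary cases of small $k$ consistent. When $k=1$ the two generators $a=1$ and $a=2^0=1$ collapse and $\mathbb{Z}_2$ has only the subgroups of orders $1$ and $2$, both of which satisfy the conclusion; this should be noted so that the equivalence $o(a)$-odd versus $o(a)=2$ versus $o(a)\ge 4$ even partitions the relevant range cleanly without overlap. I would therefore present the corollary as a direct specialization of Theorem~\ref{th-cg} together with the observation that the only even orders arising among powers of two that are neither $2$ nor realized by an odd generator correspond exactly to the excluded middle values $3\le |K|\le 2^{k-1}$ with $|K|$ a proper power-of-two multiple of $4$.
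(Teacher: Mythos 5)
Your proposal is correct and matches the paper's intent exactly: the corollary is stated without proof precisely because it is the specialization of Theorem~\ref{th-cg} to $n=2^k$, where every subgroup is $\langle 2^j\rangle$ and the generator $2^j$ is odd only when $j=0$. Your case analysis on $j$ (giving $|K|\in\{1,2,2^k\}$ as the admissible orders and ruling out $4\le|K|\le 2^{k-1}$ via the failure of all four conditions) is the same argument the authors leave implicit.
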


We use the following example to illustrate Theorem \ref{th-cg}.

\begin{example}
Consider the cyclic group $\mathbb{Z}_{60}$ of order $60$. By Theorem~\ref{th-cg}, if $H\in \{\langle 2\rangle,\langle 6\rangle,\langle 10\rangle\}$, then
$\Gamma_{\mathbb{Z}_n,H}$ does not admit a perfect code. Otherwise,  $\Gamma_{\mathbb{Z}_n,H}$ admits a perfect code.
\end{example}

\subsection{Dihedral groups}

Let $n\ge 3$ be a positive integer. The {\em dihedral group} $D_{2n}$ of order $2n$ has a presentation:
\begin{align}\label{d2n}
D_{2n}=\langle a,b: a^n=b^2=e,~bab=a^{-1}\rangle.
\end{align}
Remark that
\begin{align}\label{d2n-1}
D_{2n}=\langle a\rangle \cup \{ab,a^2b,\ldots,a^{n-1}b,b\},~~
o(a^ib)=2 \text{ for any }0\le i \le n-1.
\end{align}

The following result is a basic fact, we omit its proof.

\begin{lemma}\label{dihe-nsg}
Let $D_{2n}$ be the dihedral group as presented in \eqref{d2n}. Then a proper subgroup $H$ of $D_{2n}$ is normal if and only if one of the following holds:
\begin{enumerate}
  \item\label{dihe-nsg-1} $H\le \langle a\rangle$ if $n$ is odd;
  \item\label{dihe-nsg-2} $H\le \langle a\rangle$, $H=\langle a^2,b\rangle$ or $H=\langle a^2,ab\rangle$ if $n$ is even.
\end{enumerate}
\end{lemma}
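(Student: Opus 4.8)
The plan is to prove both directions by exploiting two conjugation identities that follow at once from $bab=a^{-1}$ (equivalently $ba^{-1}=ab$): for each $i$ one has $a(a^ib)a^{-1}=a^{i+2}b$ and $b(a^ib)b^{-1}=a^{-i}b$. These show that the reflections $a^ib$ are permuted among themselves under conjugation, and that conjugation by $a$ shifts the exponent by $2$; this orbit structure is the engine of the whole argument.

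For the sufficiency direction I would first observe that every subgroup of $\langle a\rangle$ equals $\langle a^d\rangle$ for some $d\mid n$, is centralized by $a$, and is inverted—hence preserved—by $b$, so it is normal in $D_{2n}$. When $n$ is even I would then check that $\langle a^2,b\rangle$ and $\langle a^2,ab\rangle$ each consist of $\langle a^2\rangle$ together with $n/2$ reflections, so each has order $n$ and index $2$, and is therefore normal.

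For necessity, take a proper normal subgroup $H$. If $H$ contains no reflection, then $H\subseteq\langle a\rangle$, giving the first alternative in both cases. Otherwise $a^ib\in H$, and normality together with the first identity forces the entire $\langle a\rangle$-orbit $\{a^{i+2k}b\}$ into $H$. When $n$ is odd this orbit is all of the reflections; multiplying two of them yields a nontrivial power of $a$, so $H$ absorbs $\langle a\rangle$ and then all of $D_{2n}$, contradicting properness. Hence for $n$ odd no proper normal subgroup contains a reflection, and \ref{dihe-nsg-1} follows.

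The crux is the even case. Here the reflections split into the two parity-orbits $\{a^{2k}b\}$ and $\{a^{2k+1}b\}$. I would argue that $H$ cannot meet both, since the product of two reflections of opposite parity is an odd power of $a$, and combined with $\langle a^2\rangle$ this would again yield $H=D_{2n}$; thus $H$ lies within exactly one orbit. Products within that orbit supply $\langle a^2\rangle\subseteq H$, whence $H$ contains the index-$2$ subgroup $\langle a^2,b\rangle$ or $\langle a^2,ab\rangle$, and properness forces equality, giving \ref{dihe-nsg-2}. The main obstacle I anticipate is precisely this even-case bookkeeping: showing that a single reflection forces its whole parity-orbit and then exactly the corresponding index-$2$ subgroup (and nothing larger), while cleanly excluding mixed-parity subgroups. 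Everything else reduces to routine manipulation with the two displayed conjugation formulas.
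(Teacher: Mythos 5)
Your proposal is correct and complete: the two conjugation identities $a(a^ib)a^{-1}=a^{i+2}b$ and $b(a^ib)b^{-1}=a^{-i}b$ do all the work, the parity-orbit analysis in the even case is carried out properly, and the index-$2$ argument cleanly forces equality with $\langle a^2,b\rangle$ or $\langle a^2,ab\rangle$. The paper itself states this lemma as "a basic fact" and omits the proof entirely, so there is nothing to compare against; what you have written is the standard classification of normal subgroups of $D_{2n}$ and would serve as a valid proof of the omitted lemma.
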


The following result determines all subgroups $H$ of $D_{2n}$ such that $\Gamma_{D_{2n},H}$ admits a perfect code.

\begin{thm}\label{pc-dihe}
Let $D_{2n}$ be the dihedral group as presented in \eqref{d2n} and $H$ a normal subgroup of $D_{2n}$.
Then $\Gamma_{D_{2n},H}$ admits a perfect code if and only if either $H=D_{2n}$ or one of the following holds:
\begin{enumerate}
  \item\label{pc-dihe-1} $H\le \langle a\rangle$ if $n$ is odd;
  \item\label{pc-dihe-2} $H=\langle a^2,b\rangle$, $H=\langle a^2,ab\rangle$, or $H=\langle a^t\rangle$ where either $n/t$ is odd or $2$, or $n/t\ge 4$ is even and $t$ is odd, if $n$ is even.
\end{enumerate}
\end{thm}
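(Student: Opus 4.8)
The plan is to combine the classification of normal subgroups in Lemma~\ref{dihe-nsg} with the perfect-code criterion of Theorem~\ref{thm-1}, reducing the rotation case to the cyclic result in Theorem~\ref{th-cg}. First I would dispose of the easy cases. When $H=D_{2n}$ we have $G\setminus H=\emptyset$, so condition~\ref{thm-1-3} of Theorem~\ref{thm-1} holds vacuously and $\Gamma_{D_{2n},H}$ admits a perfect code; this accounts for the clause $H=D_{2n}$. By Lemma~\ref{dihe-nsg} every other normal subgroup is either contained in $\langle a\rangle$ or, for $n$ even, equal to $\langle a^2,b\rangle$ or $\langle a^2,ab\rangle$. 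If $n$ is odd and $H\le\langle a\rangle$, then $|H|$ divides $n$ and is therefore odd, so Corollary~\ref{cor-2} yields a perfect code at once; this gives clause~\ref{pc-dihe-1}. Henceforth I assume $n$ is even.

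Next I would treat the two reflection-type subgroups. Both $\langle a^2,b\rangle$ and $\langle a^2,ab\rangle$ have index $2$, so each has a unique nontrivial coset, namely $a\langle a^2,b\rangle$ and $a\langle a^2,ab\rangle$ respectively. I would observe that this coset contains an involution: $ab$ lies in the first and $b$ in the second, and both are involutions by \eqref{d2n-1}. Since here $|H|=n\ge 4>2$, condition~\ref{thm-1-3} is the operative one, and it holds because for every $x\in G\setminus H$ the coset $Hx$ coincides with this single involution-containing coset. Hence these two subgroups always admit a perfect code, as claimed in clause~\ref{pc-dihe-2}.

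The substantial case is the rotation subgroup $H=\langle a^t\rangle$ with $t\mid n$ and $n$ even. If $|H|=n/t\le 2$ the result is immediate from Theorem~\ref{thm-1} \ref{thm-1-1} and \ref{thm-1-2}, and if $n/t$ is odd then $|H|$ is odd and Corollary~\ref{cor-2} applies; so I may assume $n/t\ge 4$ is even. I would split the cosets of $H$ in $D_{2n}$ into the rotation cosets $Ha^i$ with $0\le i\le t-1$ and the reflection cosets $Ha^ib$. Every element of a reflection coset is an involution by \eqref{d2n-1}, so each reflection coset automatically satisfies condition~\ref{thm-1-3} and may be ignored. For a rotation coset, an element $x=a^i$ with $t\nmid i$ satisfies $x^2=a^{2i}\in H$ exactly when $t\mid 2i$, and in that case condition~\ref{thm-1-3} requires $Ha^i$ to contain the unique involution $a^{n/2}$ of $\langle a\rangle$, that is, $i\equiv n/2\pmod t$. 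Under the identification $\langle a\rangle\cong\mathbb{Z}_n$, $a^j\mapsto j$, $\langle a^t\rangle\mapsto\langle t\rangle$, this is precisely the instance of condition~\ref{thm-1-3} that Theorem~\ref{thm-1} imposes on $\Gamma_{\mathbb{Z}_n,\langle t\rangle}$. Therefore $\Gamma_{D_{2n},H}$ satisfies condition~\ref{thm-1-3} if and only if $\Gamma_{\mathbb{Z}_n,\langle t\rangle}$ does, and Theorem~\ref{th-cg}, applied to $\langle t\rangle$ with smallest positive element $t$ and $o(t)=n/t$, shows this happens exactly when $n/t\ge 4$ is even and $t$ is odd, completing clause~\ref{pc-dihe-2}.

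The step I expect to be the main obstacle is making this last reduction watertight: one must check that discarding the reflection cosets is legitimate (they never obstruct condition~\ref{thm-1-3}) and that the divisibility bookkeeping for the rotation cosets---which exponents $i$ give $a^{2i}\in H$, and whether $a^{n/2}$ falls in $Ha^i$---matches the cyclic analysis verbatim. The delicate subcase is $t$ even with $i=t/2$: then $a^{t/2}\in\langle a\rangle\setminus H$ and $a^t\in H$, yet $a^{n/2}\in Ha^{t/2}$ would force $n/t$ to be odd, which is exactly the obstruction that rules out a perfect code when $n/t\ge 4$ is even and $t$ is even.
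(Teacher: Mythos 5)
Your proposal is correct and follows essentially the same route as the paper: classify the normal subgroups via Lemma~\ref{dihe-nsg}, settle the two index-$2$ subgroups $\langle a^2,b\rangle$ and $\langle a^2,ab\rangle$ directly through condition~\ref{thm-1-3} of Theorem~\ref{thm-1}, and reduce the case $H\le\langle a\rangle$ to Theorem~\ref{th-cg} by observing that every element of $D_{2n}\setminus\langle a\rangle$ is an involution, so the reflection cosets never obstruct. Your coset-by-coset justification of that last reduction is just a more explicit version of the paper's one-line argument.
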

\begin{proof}
By Theorem~\ref{thm-1}, we only need to consider that $H$ is non-trivial. If $n$ is odd, by Lemma~\ref{dihe-nsg}, then every non-trivial normal subgroup of $D_{2n}$ has odd order, which implies that the desired result holds from Corollary~\ref{cor-2}.  In the following, we assume that $n$ is even. By Lemma~\ref{dihe-nsg}, if $H=\langle a^2,a^{i}b\rangle$ for $i\in\{0,1\}$, then $|H|=n$ and $D_{2n}=H\cup H(a^{i+1}b)$, which imply that $\Gamma_{D_{2n},H}$ admits a perfect code since $o(a^{i+1}b)=2$ from Theorem~\ref{thm-1}. Now by Lemma~\ref{dihe-nsg}, let $H\le \langle a\rangle$. Note that \eqref{d2n-1} implies that every element in $D_{2n}\setminus \langle a\rangle$ is an involution. It follows from Theorem~\ref{thm-1} that $\Gamma_{D_{2n},H}$ admits a perfect code if and only if $\Gamma_{\langle a\rangle,H}$ admits a perfect code. Now the required result follows from Theorem~\ref{th-cg}.
\end{proof}

\subsection{Dicyclic groups}
For $n\ge 2$, the {\em dicyclic group} ${\rm Dic}_n$ is a group of order $4n$, which has a presentation as follows:
\begin{align}\label{q4m}
{\rm Dic}_n=\langle a,b: a^n=b^2, a^{2n}=e, b^{-1}ab=a^{-1}\rangle.
\end{align}
Remark that
\begin{align}\label{q4m-1}
{\rm Dic}_n=\langle a\rangle \cup \{a^ib: 1\le i \le 2n\},~~
o(a^ib)=4 \text{ for any } 1\le i \le 2n.
\end{align}

\begin{lemma}{\rm (\cite[p. 420]{JL01})}\label{conjq4n}
Let ${\rm Dic}_{n}$ be the dicyclic group as presented in \eqref{q4m}. Then ${\rm Dic}_n$ has $n+3$ conjugacy classes as follows:
$$
\{e\},\{a^n\},\{a^{i},a^{-i}\},
b^{{\rm Dic}_n}=\{a^{2j}b: 0\le j \le n-1\},(ab)^{{\rm Dic}_n}=\{a^{2j+1}b: 0\le j \le n-1\},
$$
where $1\le i \le n-1$.
\end{lemma}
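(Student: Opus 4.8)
The plan is to compute the conjugacy classes directly from the defining relations in \eqref{q4m}, partitioning the $4n$ elements into the cyclic part $\langle a\rangle=\{a^i:0\le i\le 2n-1\}$ and the coset $\langle a\rangle b=\{a^ib:0\le i\le 2n-1\}$ recorded in \eqref{q4m-1}. First I would extract the two conjugation rules that follow from $b^{-1}ab=a^{-1}$: namely $b^{-1}a^ib=a^{-i}$ for every $i$, and $a^jb=ba^{-j}$ (equivalently $ba^j=a^{-j}b$), both obtained by a routine induction on the exponent. Since $\langle a\rangle$ is abelian and ${\rm Dic}_n=\langle a,b\rangle$, these two rules determine every conjugate of every element, so the entire computation reduces to tracking exponents modulo $2n$.

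For an element $a^i$ of the cyclic part, conjugation by a power of $a$ fixes it, while conjugation by any $a^jb$ gives $(a^jb)^{-1}a^i(a^jb)=b^{-1}a^ib=a^{-i}$; hence $(a^i)^{{\rm Dic}_n}=\{a^i,a^{-i}\}$. I would then isolate the two degenerate cases: $i=0$ yields the singleton $\{e\}$, and $i=n$ yields the singleton $\{a^n\}$, since $a^{-n}=a^{2n-n}=a^n$. For $1\le i\le n-1$ one has $a^i\ne a^{-i}$ (otherwise $2n\mid 2i$ forces $n\mid i$), so these give $n-1$ genuine two-element classes; together with $\{e\}$ and $\{a^n\}$ they account for exactly $2+2(n-1)=2n$ elements, that is, all of $\langle a\rangle$, in $n+1$ classes.

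For an element $a^ib$ of the coset, conjugation by $a^j$ gives $a^{-j}(a^ib)a^j=a^{i-j}(ba^j)=a^{i-2j}b$, and conjugation by $b$ gives $b^{-1}(a^ib)b=a^{-i}b$. As $j$ ranges over all integers, $i-2j$ runs through every residue modulo $2n$ of the same parity as $i$, so conjugation by $\langle a\rangle$ alone already realizes all $n$ elements $a^kb$ with $k\equiv i\pmod 2$, and conjugation by $b$ stays within that parity. Consequently the coset splits into precisely two classes, the even one $b^{{\rm Dic}_n}=\{a^{2j}b:0\le j\le n-1\}$ and the odd one $(ab)^{{\rm Dic}_n}=\{a^{2j+1}b:0\le j\le n-1\}$, each of size $n$. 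Adding these two to the $n+1$ classes from $\langle a\rangle$ gives $n+3$ classes, and the size check $2n+2n=4n$ confirms nothing has been missed.

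The step I expect to require the most care is verifying that the two parity classes in $\langle a\rangle b$ are genuinely distinct rather than merged by some conjugation. This hinges precisely on the observation that both conjugation rules preserve the parity of the exponent of $a$ (conjugation by $a$ shifts it by an even amount, conjugation by $b$ negates it), so no inner automorphism can send an element $a^{\text{even}}b$ to an element $a^{\text{odd}}b$. Everything else is bookkeeping of exponents modulo $2n$ together with the two degenerate cases $i=0$ and $i=n$ in the cyclic part.
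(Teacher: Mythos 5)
Your proof is correct. Note that the paper does not prove this lemma at all---it is quoted from James and Liebeck \cite[p.~420]{JL01}---so there is no in-paper argument to compare against; your direct computation is a sound self-contained verification. The two conjugation rules $b^{-1}a^ib=a^{-i}$ and $ba^j=a^{-j}b$ do determine all conjugates, the exponent bookkeeping modulo $2n$ is right (including the degenerate singletons at $i=0$ and $i=n$, and the fact that $a^i\ne a^{-i}$ for $1\le i\le n-1$), and your parity-invariance observation correctly shows that the coset $\langle a\rangle b$ splits into exactly the two classes of size $n$, giving the count $2+ (n-1)+2=n+3$ with the element total $2n+2n=4n$ confirming completeness.
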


The following result determines all subgroups $H$ of ${\rm Dic}_{n}$ such that $\Gamma_{{\rm Dic}_{n},H}$ admits a perfect code.

\begin{thm}\label{pc-dic}
Let ${\rm Dic}_n$ be the dicyclic group as presented in \eqref{q4m} and $H$ a normal subgroup of ${\rm Dic}_n$.
Then $\Gamma_{{\rm Dic}_n,H}$ admits a perfect code if and only if either $H={\rm Dic}_n$ or $H=\langle a^t\rangle$ such that $(2n)/t$ is an odd integer or $2$.
\end{thm}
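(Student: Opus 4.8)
The plan is to apply the criterion of Theorem~\ref{thm-1} after recording two structural facts about ${\rm Dic}_n$ that drive everything. First, ${\rm Dic}_n$ has a \emph{unique} involution, namely $a^n$: by \eqref{q4m-1} every element outside $\langle a\rangle$ has order $4$, while in the cyclic group $\langle a\rangle$ of order $2n$ the only involution is $a^n$. Second, the elementary identity $(a^ib)^2=a^iba^ib=a^ia^{-i}b^2=a^n$ holds for every $i$, so \emph{every} element of ${\rm Dic}_n\setminus\langle a\rangle$ squares to $a^n$. Before using these I would pin down the normal subgroups: since a normal subgroup is a union of conjugacy classes, Lemma~\ref{conjq4n} together with a short computation (the product of two elements of $b^{{\rm Dic}_n}$ is $a^{2j+n}$) shows that the proper nontrivial normal subgroups of ${\rm Dic}_n$ are exactly the subgroups $\langle a^t\rangle\le\langle a\rangle$ (with $t\mid 2n$), together with the two index-$2$ subgroups $\langle a^2,b\rangle$ and $\langle a^2,ab\rangle$, the latter two occurring precisely when $n$ is even. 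The proof then splits over these families, plus the case $H={\rm Dic}_n$.

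For $H=\langle a^t\rangle$ write $m=|H|=2n/t$. If $m$ is odd (including $m=1$) then $\Gamma_{{\rm Dic}_n,H}$ has a perfect code at once by Corollary~\ref{cor-2}, and if $m=2$ it has one by Theorem~\ref{thm-1} \ref{thm-1-2}; these are precisely the cases where $2n/t$ is odd or $2$. It remains to rule out $m\ge 4$ even. Observe that $m=2n/t$ is even if and only if $t\mid n$, hence $a^n\in\langle a^t\rangle=H$. Taking $x=b$ then gives $x\in{\rm Dic}_n\setminus H$ with $x^2=a^n\in H$, while the coset $Hb\subseteq\langle a\rangle b$ contains no involution, because the only involution $a^n$ lies in $H\subseteq\langle a\rangle$. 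By Theorem~\ref{thm-1} this forbids a perfect code, finishing the $\langle a^t\rangle$ family.

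For the index-$2$ subgroups $H\in\{\langle a^2,b\rangle,\langle a^2,ab\rangle\}$ (so $n$ is even and $|H|=2n\ge 4$) I would argue failure the same way: taking $x=a$ gives $x\in{\rm Dic}_n\setminus H$ with $x^2=a^2\in\langle a^2\rangle\subseteq H$, while the nontrivial coset ${\rm Dic}_n\setminus H$ again contains no involution, since $n$ even forces $a^n\in\langle a^2\rangle\subseteq H$. Hence Theorem~\ref{thm-1} denies a perfect code. Finally $H={\rm Dic}_n$ satisfies condition \ref{thm-1-3} vacuously (there is no $x\in G\setminus H$), so $\Gamma_{{\rm Dic}_n,{\rm Dic}_n}$ has a perfect code; indeed $\{e\}$ is one. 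Assembling the three families yields the stated equivalence.

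The step I expect to demand the most care is the classification of normal subgroups rather than the perfect-code analysis itself: one must be sure no further normal subgroup is hiding, in particular that an element outside $\langle a\rangle$ forces $H$ up to one of $\langle a^2,b\rangle$, $\langle a^2,ab\rangle$ or ${\rm Dic}_n$, and that the first two collapse to ${\rm Dic}_n$ when $n$ is odd (so that they are genuinely new only in the even case). Once the subgroup lattice is correct, the unique-involution observation together with $(a^ib)^2=a^n$ reduces each perfect-code verdict to a one-line application of Theorem~\ref{thm-1}.
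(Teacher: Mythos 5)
Your proof is correct and follows essentially the same route as the paper: both arguments reduce everything to Theorem~\ref{thm-1} via the facts that $a^n$ is the unique involution of ${\rm Dic}_n$ and that $(a^ib)^2=a^n$ for all $i$. The only cosmetic difference is that you first classify the normal subgroups and then kill the index-$2$ subgroups $\langle a^2,b\rangle$, $\langle a^2,ab\rangle$ with the witness $x=a$, whereas the paper skips the explicit classification and instead shows directly that any proper normal $H$ containing some $a^ib$ fails condition \ref{thm-1-3} with the witness $x=a^{i+1}b$; both witnesses work for the same reason (the relevant coset misses the unique involution).
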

\begin{proof}
By Theorem~\ref{thm-1}, we only need to consider that $H$ is non-trivial.
Note that by \eqref{q4m}, it is clear that $\langle a\rangle$ is a normal subgroup of ${\rm Dic}_n$, which implies that every subgroup of $\langle a\rangle$ is also a normal subgroup of ${\rm Dic}_n$. In view of \eqref{q4m-1}, $a^n$ is the unique involution in ${\rm Dic}_n$.

Suppose that $H$ contains an element $a^ib$ for some $i\in\{0,1,\ldots,2n-1\}$. By Lemma~\ref{conjq4n}, we have $(a^{i}b)^{{\rm Dic}_n}\subseteq H$. In view of \eqref{q4m-1}, one gets $(a^ib)^2=a^n\in H$. Since $|H|\mid 2n$ and $H$ is non-trivial, we obtain $a^{i+1}b\notin H$. Since $(a^{i+1}b)^2=a^n\in H$ and $H(a^{i+1}b)$ has no involutions, from Theorem~\ref{thm-1}, $\Gamma_{{\rm Dic}_n,H}$ has no perfect codes.

Suppose that $H\le \langle a\rangle$ such that $|H|\ge 4$ is even. Since $a^n$ is the unique involution in ${\rm Dic}_n$, one has $a^n\in H$. By \eqref{q4m-1}, $b^2=a^n$. Since $b\in {\rm Dic}_n\setminus H$ and $Hb$ has no involutions, form Theorem~\ref{thm-1}, $\Gamma_{{\rm Dic}_n,H}$ has no perfect codes.

This completes the proof of this result.
\end{proof}

\section{Code-perfect groups}

In this section, we concern code-perfect groups. The following result is obtained by applying Theorem~\ref{thm-1} and  Corollary~\ref{cor-3} to abelian groups.

\begin{prop}\label{cor-1}
Let $A$ be an abelian group. Then $\Gamma_{A,H}$ admits a perfect code for each subgroup $H$ of $A$ if one the following holds:
\begin{enumerate}
\item\label{cor-1-1}$A$ has odd order;
\item\label{cor-1-2} $A$ is an elementary abelian $2$-group.
\end{enumerate}
\end{prop}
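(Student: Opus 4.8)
The plan is to verify that in each of the two cases, every subgroup $H$ of $A$ satisfies one of the three conditions of Theorem~\ref{thm-1}, so that $\Gamma_{A,H}$ admits a perfect code. Since the theorem gives a complete characterization, the task reduces entirely to checking the relevant condition on involutions in cosets.

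For case \ref{cor-1-1}, when $A$ has odd order, every subgroup $H$ also has odd order, so this is an immediate consequence of Corollary~\ref{cor-3} (or equally of Corollary~\ref{cor-2}). I would simply cite Corollary~\ref{cor-3} and conclude that $\Gamma_{A,H}$ admits a perfect code for every subgroup $H$.

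For case \ref{cor-1-2}, when $A$ is an elementary abelian $2$-group, every nonidentity element $x$ of $A$ is an involution, since $2x=0$ for all $x\in A$ (writing $A$ additively as in the paper). I would argue via Theorem~\ref{thm-1}\ref{thm-1-3}: let $H$ be any subgroup of $A$, and take any $x\in A\setminus H$ with $2x\in H$. Because $A$ is elementary abelian, $2x=0\in H$ automatically, and moreover $x$ itself is an involution lying in the coset $H+x$ (as $x\in A\setminus H$ forces $x\neq 0$, so $o(x)=2$). Hence $H+x$ contains an involution for every such $x$, and condition \ref{thm-1-3} of Theorem~\ref{thm-1} is satisfied, giving a perfect code.

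The main obstacle, if any, is purely bookkeeping: one must make sure the coset $H+x$ genuinely contains an involution rather than merely asserting that $x$ is an involution — but since $x\in H+x$ and $x$ is itself an involution when $x\neq 0$, there is nothing subtle here. Both cases follow directly from the earlier characterization, so this proposition is essentially a corollary and requires no new ideas beyond invoking Theorem~\ref{thm-1} and Corollary~\ref{cor-3}.
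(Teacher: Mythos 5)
Your proposal is correct and follows exactly the route the paper intends: the paper gives no written proof but states that the proposition "is obtained by applying Theorem~\ref{thm-1} and Corollary~\ref{cor-3} to abelian groups," which is precisely your argument (Corollary~\ref{cor-3} for the odd-order case, and condition~\ref{thm-1-3} of Theorem~\ref{thm-1} with $x$ itself serving as the involution in $H+x$ for the elementary abelian $2$-group case). No gaps.
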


The next result is immediate from Lemma \ref{dihe-nsg} \ref{dihe-nsg-1} and Theorem \ref{pc-dihe} \ref{pc-dihe-1}.

\begin{prop}\label{cor-2'}
Let $D_{2n}$ be the dihedral group of order $2n$. If $n\geq3$ is odd, then $\Gamma_{D_{2n},H}$ admits a perfect code for each subgroup $H$ of $D_{2n}$.
\end{prop}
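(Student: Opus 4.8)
The plan is to reduce Proposition~\ref{cor-2'} entirely to results already established for dihedral groups, since the statement concerns $D_{2n}$ with $n \ge 3$ odd and asks us to show $\Gamma_{D_{2n},H}$ admits a perfect code for \emph{every} subgroup $H$. The essential observation is that the quantifier ranges over all subgroups, not only normal ones, so the first step must address whether non-normal subgroups can arise. However, for the subgroup sum graph to be well-defined as an undirected simple graph we need $H$ normal; so implicitly the relevant subgroups here are exactly the normal ones, and Lemma~\ref{dihe-nsg} already catalogues these.

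First I would invoke Lemma~\ref{dihe-nsg}~\ref{dihe-nsg-1}: when $n$ is odd, every proper subgroup of $D_{2n}$ that is normal must satisfy $H \le \langle a \rangle$. This is the structural input that makes the odd case clean, because $\langle a \rangle$ is cyclic of odd order $n$, and hence every such $H$ is a subgroup of a cyclic group of odd order. Second, I would observe that the only remaining possibility beyond the proper normal subgroups is $H = D_{2n}$ itself, which is covered directly since $\Gamma_{G,G}$ always admits a perfect code by the trivial case analysis in Theorem~\ref{thm-1}~\ref{thm-1-1} applied to the complementary structure, or more directly by Theorem~\ref{pc-dihe}.

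The cleanest route is simply to quote Theorem~\ref{pc-dihe}~\ref{pc-dihe-1}, which states that when $n$ is odd and $H \le \langle a \rangle$, the graph $\Gamma_{D_{2n},H}$ admits a perfect code; combined with the $H = D_{2n}$ case also handled in Theorem~\ref{pc-dihe}, this exhausts all normal subgroups by Lemma~\ref{dihe-nsg}. Thus the proposition follows immediately by combining these two already-proven facts. Concretely: by Lemma~\ref{dihe-nsg}~\ref{dihe-nsg-1} every proper normal subgroup $H$ satisfies $H \le \langle a \rangle$, so condition~\ref{pc-dihe-1} of Theorem~\ref{pc-dihe} holds and $\Gamma_{D_{2n},H}$ has a perfect code; and the case $H = D_{2n}$ is covered by the ``$H = D_{2n}$'' clause of the same theorem.

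I expect there to be essentially no obstacle here, since the proposition is a direct corollary harvested from the machinery built earlier. The only point requiring a moment's care is making sure the quantifier ``each subgroup $H$'' is correctly interpreted as ranging over normal subgroups (the graphs being defined only for normal $H$), and confirming that Lemma~\ref{dihe-nsg}~\ref{dihe-nsg-1} genuinely covers \emph{all} proper normal subgroups in the odd case with no exceptional families; the lemma's statement guarantees exactly this. Hence the proof is a one-line appeal to the two cited results, which is presumably why the authors state it as a proposition with proof omitted or deferred to the citations of Lemma~\ref{dihe-nsg} and Theorem~\ref{pc-dihe}.
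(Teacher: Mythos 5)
Your proposal is correct and matches the paper exactly: the paper states this proposition as immediate from Lemma~\ref{dihe-nsg}~\ref{dihe-nsg-1} and Theorem~\ref{pc-dihe}~\ref{pc-dihe-1}, which is precisely the reduction you give (with the $H=D_{2n}$ case absorbed into Theorem~\ref{pc-dihe}). Your side remark attributing the $H=G$ case to Theorem~\ref{thm-1}~\ref{thm-1-1} is slightly off (that clause is for $H=\{e\}$), but you immediately supply the correct reference, so nothing is affected.
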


By Propositions \ref{cor-1} and \ref{cor-2'}, there exist code-perfect groups. In this section, we classify all finite code-perfect Dedekind groups. Suppose that $G$ is a Dedekind group.
Then $G$ is a code-perfect group if and only if for any  subgroup $H$ of $G$, $\Gamma_{G,H}$ admits a perfect code. Note that all abelian groups and the quaternion group $Q_8$ of order $8$ are Dedekind groups. The following remark gives some examples of Dedekind groups which are not code-perfect.

\begin{remark}\label{Q8}
The following hold:
\begin{enumerate}
\item\label{Q8-1} Let $Q_8$ be the quaternion group of order $8$. Namely,
$$
Q_8=\{\pm 1,\pm i,\pm j,\pm k\},~ij=k,~jk=i,~ki=j,~i^2=j^2=k^2=-1.
$$
Let $H=\langle i\rangle=\{\pm 1,\pm i\}$, which is a normal subgroup of $Q_8$. Then for any $x\in Q_8\setminus H$, we have $x^2=-1\in H$, however, $Hx=\{\pm j,\pm k\}$ has no involutions. It follows from Theorem~\ref{thm-1} that $\Gamma_{Q_8,H}$ does not admit a  perfect code.
In fact, the result also can be obtained from Theorem~\ref{pc-dic} as $Q_8\cong {\rm Dic}_2$.

\item\label{Q8-2} Let $A=\mathbb{Z}_2\times \mathbb{Z}_4$.
Take the subgroup $H=\{(0,0),(0,2),(1,0),(1,2)\}$ in $A$. In fact, $H\cong \mathbb{Z}_2\times \mathbb{Z}_2$. Then it follows from Theorem~\ref{ncg-2g} that $\Gamma_{A,H}$ has no perfect codes.
\end{enumerate}
\end{remark}


Before giving the main result of this section, we need an auxiliary result which can be checked directly.

\begin{lemma}\label{sub-cp}
Suppose that $G$ is a Dedekind group. Then $G$ is code-perfect if and only if for any subgroup $H$ of $G$, $H$ is code-perfect.
\end{lemma}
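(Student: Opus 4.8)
The plan is to prove the two implications separately, noting that the backward direction is immediate: since $G$ is itself a subgroup of $G$, the hypothesis that every subgroup of $G$ is code-perfect already contains the assertion that $G$ is code-perfect. All the work therefore lies in the forward direction, and there the plan is to show that for every subgroup $H\le G$ and every subgroup $K\le H$, the graph $\Gamma_{H,K}$ arises as a disjoint union of connected components of $\Gamma_{G,K}$. Since $G$ is a Dedekind group, every subgroup is normal, so $K$ is normal in $G$ and, being contained in $H$, normal in $H$; moreover $H$ is again Dedekind, so the statement ``$H$ is code-perfect'' is equivalent to ``$\Gamma_{H,K}$ admits a perfect code for every subgroup $K$ of $H$'', which is exactly what I aim to verify.

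The key step is to observe that no edge of $\Gamma_{G,K}$ joins a vertex of $H$ to a vertex of $G\setminus H$. Indeed, if $x\in H$ and $y\in G$ are adjacent in $\Gamma_{G,K}$, then $xy\in K\setminus\{e\}$, whence $y=x^{-1}(xy)\in H$, because $x^{-1}\in H$ and $K\subseteq H$. Consequently the vertex set $H$ is a union of connected components of $\Gamma_{G,K}$, and the subgraph of $\Gamma_{G,K}$ induced on $H$ is precisely $\Gamma_{H,K}$ (the adjacency condition $xy\in K\setminus\{e\}$ for $x,y\in H$ being identical in the two graphs). Thus $\Gamma_{H,K}$ is a disjoint union of connected components of $\Gamma_{G,K}$.

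Finally I would invoke the elementary fact that a finite graph admits a perfect code if and only if each of its connected components does, since the defining condition of a perfect code is local to components. As $G$ is code-perfect and $K$ is a (normal) subgroup of $G$, the graph $\Gamma_{G,K}$ admits a perfect code, hence so does each of its connected components; in particular each component making up $\Gamma_{H,K}$ admits one, and therefore $\Gamma_{H,K}$ admits a perfect code. Letting $K$ range over all subgroups of $H$, and then $H$ over all subgroups of $G$, yields the forward direction. The only point requiring any care is the component decomposition of the previous paragraph, but it follows purely from $K\le H$, so no appeal to the explicit structure of Theorem~\ref{s-th2} is needed; that theorem does, however, furnish an alternative verification via the description of the components as single cosets $Kx$ or coset pairs $Kx\cup Kx^{-1}$.
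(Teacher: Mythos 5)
Your proof is correct. The paper offers no argument at all for this lemma (it is stated as something that ``can be checked directly''), so there is nothing to compare against line by line; your write-up supplies exactly the verification the authors leave implicit. Both directions are handled properly: the backward implication is trivial since $G\le G$, and for the forward one you correctly note that a subgroup $H$ of a Dedekind group is again Dedekind (any $K\le H$ is normal in $G$, hence in $H$), that the adjacency condition $xy\in K\setminus\{e\}$ with $K\subseteq H$ forces $y\in H$ whenever $x\in H$, so that $\Gamma_{H,K}$ is a union of connected components of $\Gamma_{G,K}$, and that perfect codes restrict to and assemble from components. An alternative, equally short route would be to apply the criterion of Theorem~\ref{thm-1} directly: condition \ref{thm-1-3} for the pair $(G,K)$ quantifies over all $x\in G\setminus K$ with $x^2\in K$, hence in particular over all such $x\in H\setminus K$, giving the same condition for $(H,K)$; your component argument is more self-contained since it does not depend on that classification.
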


The main result of this section is as follows, which classifies all code-perfect Dedekind group.

\begin{thm}\label{code-perfect}
A Dedekind group is code-perfect if and only if it is isomorphic to either $\mathbb{Z}_4$ or $\mathbb{Z}_2^t\times Q$, where $t\ge 0$ and $Q$ is an abelian group of odd order.
\end{thm}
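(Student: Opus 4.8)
The plan is to reduce everything to two structural inputs: the subgroup-closure property of code-perfectness recorded in Lemma~\ref{sub-cp}, and the classical description of finite Dedekind groups. Recall that a finite Dedekind group is either abelian or isomorphic to $Q_8\times\mathbb{Z}_2^{s}\times B$ with $B$ of odd order (the Dedekind--Baer classification), which I would cite. Since Lemma~\ref{sub-cp} says a Dedekind group is code-perfect if and only if all of its subgroups are, obstructions may be sought on convenient subgroups while positive conclusions can be drawn factorwise.

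For the sufficiency I would treat the two families directly. The subgroups of $\mathbb{Z}_4$ have orders $1,2,4$, so Corollary~\ref{2g-cyc} produces a perfect code in each case. For $A=\mathbb{Z}_2^{t}\times Q$ with $Q$ of odd order, coprimality splits every subgroup as $H=H_2\times H_Q$ with $H_2\le\mathbb{Z}_2^{t}$ and $H_Q\le Q$. Given $x=(x_2,x_Q)\in A\setminus H$ with $2x\in H$, the exponent-$2$ factor forces $2x=(0,2x_Q)$, so $2x_Q\in H_Q$ and then $x_Q\in H_Q$ by Lemma~\ref{o-og}; since $x\notin H$ this gives $x_2\notin H_2$, hence $x_2\neq0$ and $(x_2,0)=x+(0,-x_Q)\in H+x$ is an involution. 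Thus condition~\ref{thm-1-3} of Theorem~\ref{thm-1} holds and $\Gamma_{A,H}$ admits a perfect code.

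For the necessity I would argue contrapositively. If $G$ is non-abelian Dedekind then $Q_8\le G$, and $\Gamma_{Q_8,\langle i\rangle}$ has no perfect code by Remark~\ref{Q8} \ref{Q8-1}, so $G$ is not code-perfect by Lemma~\ref{sub-cp}. Hence a code-perfect Dedekind group is abelian, say $A=A_2\times Q$ with Sylow $2$-subgroup $A_2$; since $A_2\le A$, Lemma~\ref{sub-cp} makes $A_2$ code-perfect, and the crux is to show that a code-perfect abelian $2$-group must be elementary abelian or $\mathbb{Z}_4$. If $A_2$ is non-cyclic but not elementary abelian (so its largest cyclic factor $\mathbb{Z}_{2^{n_k}}$ has $n_k\ge2$), I would take the subgroup $K$ equal to the full first cyclic factor times the trivial subgroups in the middle factors times the order-$2$ subgroup $\langle 2^{n_k-1}\rangle$ of the last factor: then $|K|\ge4$ and $K$ is not of the all-trivial-factor form, so Theorem~\ref{ncg-2g} denies it a perfect code. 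If $A_2\cong\mathbb{Z}_{2^k}$ with $k\ge3$, its subgroup of order $4$ fails by Corollary~\ref{2g-cyc}. This leaves $A_2$ elementary abelian, giving $A=\mathbb{Z}_2^{t}\times Q$, or $A_2\cong\mathbb{Z}_4$; in the latter case, were $Q$ non-trivial I would take $H=\{0,2\}\times H_Q$ with $H_Q\le Q$ non-trivial and $x=(1,0)$, so that $2x\in H$ while $H+x=\{1,3\}\times H_Q$ has no involution, whence $\Gamma_{A,H}$ has no perfect code by Theorem~\ref{thm-1}. Therefore $Q$ is trivial and $A\cong\mathbb{Z}_4$, which completes the classification.

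The step I expect to be most delicate is the internal classification of code-perfect abelian $2$-groups: in each non-admissible case one must exhibit a failing subgroup of order at least $3$ (so that the degenerate alternatives \ref{thm-1-1} and \ref{thm-1-2} of Theorem~\ref{thm-1} cannot accidentally rescue it) and verify that its isomorphism type avoids the all-trivial-factor pattern required by Theorem~\ref{ncg-2g}. The remaining steps are routine applications of the earlier results together with the coprime splitting $H=H_2\times H_Q$.
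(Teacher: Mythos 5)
Your proposal is correct and follows essentially the same route as the paper: sufficiency via the coprime splitting $H=H_2\times H_Q$, Lemma~\ref{o-og} and Theorem~\ref{thm-1}\ref{thm-1-3}, and necessity by reducing to abelian groups through $Q_8$ and Lemma~\ref{sub-cp}, constraining the Sylow $2$-subgroup, and eliminating $\mathbb{Z}_4\times Q$ with $Q$ nontrivial via the subgroup $\{0,2\}\times H_Q$. The only cosmetic differences are that you run the sufficiency as one uniform computation instead of the paper's two cases, and you rule out bad Sylow $2$-subgroups by applying Theorem~\ref{ncg-2g} and Corollary~\ref{2g-cyc} directly, where the paper instead forbids subgroups isomorphic to $\mathbb{Z}_2\times\mathbb{Z}_4$ and $\mathbb{Z}_8$ using Remark~\ref{Q8}\ref{Q8-2}.
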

\begin{proof}
We first prove the sufficiency. By Corollary~\ref{2g-cyc} and Proposition~\ref{cor-1} \ref{cor-1-1}, $\mathbb{Z}_4$ and all abelian groups with odd order are code-perfect, as desired. Let $H$ be a subgroup of $\mathbb{Z}_2^t\times Q$, where $t>0$ and $Q$ is an abelian group of odd order. It suffices to show that $\Gamma_{\mathbb{Z}_2^t\times Q,H}$ admits a perfect code.  By Theorem~\ref{thm-1} and Corollary~\ref{cor-2}, we only need to consider the case that $4\le |H|< |\mathbb{Z}_2^t\times Q|$, $|H|$ is even and $2a\in H$ for some $a\in (\mathbb{Z}_2^t\times Q)\setminus H$.
Write
$$H=H_2\times H_{2'},$$
where $H_2$ is the Sylow $2$-subgroup of $H$ and $H_{2'}$ is the  Hall $2'$-subgroup of $H$. Then $H_2\le \mathbb{Z}_2^t$ and $H_{2'}\le Q$. Let $a=x+x'$ for some $x\in \mathbb{Z}_2^t$ and $x'\in Q$. Then $2a=2x'\in H_{2'}\le Q$.

\medskip
\noindent {\bf Case 1.} $|H_{2'}|=1$.
\medskip

Since $2a=2x'\in H_{2'}\le Q$, we have $x'=e$, which implies $a\in \mathbb{Z}_2^t$. Therefore, $H+a\subseteq \mathbb{Z}_2^t$, and so $H+a$ must have an involution. Now it follows from Theorem~\ref{thm-1} \ref{thm-1-3} that $\Gamma_{G,H}$ admits a perfect code.

\medskip
\noindent {\bf Case 2.} $|H_{2'}|\ne 1$.
\medskip

Since $2a=2x'\in H_{2'}\le Q$, from Lemma~\ref{o-og}, one has $x'\in H_{2'}$. Let $y\in H_2$. Then $y-x'\in H$ and $y+x=y-x'+x+x'=y-x'+a\in H+a$. Since $x,y\in H_2\le \mathbb{Z}_2^t$, $y+x$ is an involution. It follows from Theorem~\ref{thm-1} \ref{thm-1-3} that $\Gamma_{\mathbb{Z}_2^t\times Q,H}$ admits a perfect code, as desired.

We next prove the necessity. Suppose that the group $K$ is code-perfect.
By \cite{Ded}, every non-abelian Dedekind group contains $Q_8$ as its a subgroup. Thus, it follows from Remark~\ref{Q8} \ref{Q8-1} and Lemma~\ref{sub-cp} that every non-abelian Dedekind group is not code-perfect. As a result, $K$ is an abelian group.
In view of Remark~\ref{Q8} \ref{Q8-2} and Lemma~\ref{sub-cp},
$K$ has no subgroups isomorphic to $\mathbb{Z}_2\times \mathbb{Z}_4$. Moreover, Corollary~\ref{2g-cyc} and Lemma~\ref{sub-cp} imply that $K$ has no subgroups isomorphic to $\mathbb{Z}_8$. It follows that $K$ is isomorphic to one of the following:
$$
\mathbb{Z}_4\times Q, ~~\mathbb{Z}_2^t\times Q,
$$
where $t\ge 0$ and $Q$ is an abelian group of odd order.

It suffices to show that if $K=\mathbb{Z}_4\times Q$, then $|Q|=1$.
Suppose, for a contradiction, that $K=\mathbb{Z}_4\times Q$ with $|Q|\ne 1$.
Then $K$ has a subgroup $H=\{(0,x),(2,x):x\in Q\}$, which is isomorphic to $\mathbb{Z}_2\times Q$. Note that $(1,e)\notin H$, where $e$ is the identity element of $Q$. Then $2(1,e)\in H$. However, $H+(1,e)$ has no involutions since $K$ has a unique involution belonging to $H$. It follows from Theorem~\ref{thm-1} that $\Gamma_{K,H}$ does not admit a perfect code. Thus, $K$ is not code-perfect, a contradiction.
\end{proof}

The following corollary is immediate from Theorem \ref{code-perfect}.

\begin{cor}
The cyclic group $\mathbb{Z}_n$ is code-perfect if and only if $n$ is odd or $n=2d$, where $d$ is $2$ or an odd integer.
\end{cor}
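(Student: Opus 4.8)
The plan is to read off the result directly from Theorem~\ref{code-perfect}, since $\mathbb{Z}_n$ is abelian and hence a Dedekind group, so the classification applies verbatim. The task reduces to determining for which $n$ the cyclic group $\mathbb{Z}_n$ is isomorphic to one of the two families in that theorem, namely $\mathbb{Z}_4$ or $\mathbb{Z}_2^t\times Q$ with $t\ge 0$ and $Q$ of odd order.

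First I would dispose of the family $\mathbb{Z}_2^t\times Q$. The key observation is that $\mathbb{Z}_n$ is cyclic, so I need $\mathbb{Z}_2^t\times Q$ to be cyclic. Since $Q$ has odd order, $\gcd(2^t,|Q|)=1$, and a direct product of groups of coprime orders is cyclic if and only if each factor is cyclic; this forces $\mathbb{Z}_2^t$ to be cyclic, i.e. $t\le 1$. If $t=0$, then $\mathbb{Z}_n\cong Q$ has odd order, so $n$ is odd. If $t=1$, then $\mathbb{Z}_n\cong\mathbb{Z}_2\times Q\cong\mathbb{Z}_{2|Q|}$ with $|Q|$ odd, so $n=2d$ with $d=|Q|$ odd. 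Conversely, every odd $n$ arises from $t=0$ (take $Q=\mathbb{Z}_n$) and every $n=2d$ with $d$ odd arises from $t=1$ (take $Q=\mathbb{Z}_d$).

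Next I would handle the sporadic factor $\mathbb{Z}_4$: clearly $\mathbb{Z}_n\cong\mathbb{Z}_4$ exactly when $n=4$, and I would note that $4=2\cdot 2$, so this is precisely the case $n=2d$ with $d=2$. Combining the two families, $\mathbb{Z}_n$ is code-perfect if and only if $n$ is odd, or $n=2d$ with $d$ odd, or $n=2d$ with $d=2$, which is exactly the statement that $n$ is odd or $n=2d$ where $d$ is $2$ or an odd integer.

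There is no real obstacle here; the only point requiring a little care is the cyclicity argument forcing $t\le 1$, together with making sure the isolated case $\mathbb{Z}_4$ is correctly folded into the ``$n=2d$ with $d=2$'' clause rather than being overlooked. Since Theorem~\ref{code-perfect} already provides the full classification of code-perfect Dedekind groups, the corollary is then immediate.
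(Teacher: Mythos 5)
Your proposal is correct and follows exactly the route the paper intends: the paper states the corollary as immediate from Theorem~\ref{code-perfect}, and you simply supply the routine verification of which cyclic groups are isomorphic to $\mathbb{Z}_4$ or to $\mathbb{Z}_2^t\times Q$, which the paper omits. The cyclicity argument forcing $t\le 1$ and the folding of $\mathbb{Z}_4$ into the clause $n=2d$ with $d=2$ are both handled correctly.
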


\section{Total perfect codes of $\Gamma_{G,H}$}

In this section, we discuss subgroup sum graphs having a total perfect code.

\begin{lemma}\label{tcp-g3-ex}
Let $H$ be a normal subgroup of $G$ with $|H|=3$. Then $x^2\in H$ and $Hx$ contains an element of order at least $3$ for any $x\in G\setminus H$
if and only if $G=\mathbb{Z}_2^n\times \mathbb{Z}_3$ and $H=\{e,(0,\ldots,0,1),(0,\ldots,0,2)\}$, where $n\ge 0$.
\end{lemma}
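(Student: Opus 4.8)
The plan is to prove the two implications separately; the sufficiency is a one-line computation, while the necessity is where the structural content lies. Throughout write $H=\langle a\rangle$ with $o(a)=3$, and use that $\mathrm{Aut}(\mathbb{Z}_3)\cong\mathbb{Z}_2$, so conjugation by any element sends $a$ to either $a$ or $a^{-1}$.

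\emph{Sufficiency.} Suppose $G=\mathbb{Z}_2^n\times\mathbb{Z}_3$ (written additively) and $H$ is the stated $\mathbb{Z}_3$-factor. For $x=(v,c)\in G\setminus H$ we have $v\neq 0$, hence $2x=(0,2c)\in H$, giving the first requirement. The coset $H+x=\{(v,0),(v,1),(v,2)\}$ contains $(v,1)$, which has order $\mathrm{lcm}(2,3)=6\geq 3$ since $o(v)=2$ and $o(1)=3$ in the (commuting) factors; this gives the second requirement. The degenerate case $n=0$ (so $G=H$) makes the condition vacuously true, matching $\mathbb{Z}_2^0\times\mathbb{Z}_3$.

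\emph{Necessity, setup.} Assume the hypothesis. Since $x^2\in H$ for every $x\in G\setminus H$, and trivially for $x\in H$, every element of $G/H$ squares to the identity, so $G/H$ is an elementary abelian $2$-group, say $G/H\cong\mathbb{Z}_2^n$. Consequently $|G|=3\cdot 2^n$, and $H$ is the unique (normal) Sylow $3$-subgroup of $G$.

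\emph{Necessity, key step.} The crux is to force $H\leq Z(G)$, and I would argue by contradiction. If some element does not centralize $a$, then (as the only nontrivial automorphism of $H$ is inversion) there is $g\in G$ with $gag^{-1}=a^{-1}$; and $g\notin H$ because $H$ is abelian. Now $g$ commutes with $g^2\in H$, so writing $g^2=a^j$ we get $a^{-j}=ga^jg^{-1}=g g^2 g^{-1}=a^{j}$, whence $a^{2j}=e$ and thus $g^2=e$. Then for each $i$ the coset element $a^ig\in Hg$ satisfies $(a^ig)^2=a^i(ga^ig^{-1})g^2=a^i a^{-i}=e$, so $Hg=\{g,ag,a^2g\}$ consists entirely of involutions and contains no element of order $\geq 3$ — contradicting the hypothesis. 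Hence $H\leq Z(G)$. I expect this centrality step to be the main obstacle, since it is exactly the point where the ``element of order at least $3$'' hypothesis must be translated into a rigidity constraint on the conjugation action of $G$ on $H$.

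\emph{Necessity, conclusion.} With $H$ central of order $3$ coprime to $[G:H]=2^n$, choose a Sylow $2$-subgroup $P$ (Sylow's theorem). Then $H\cap P=\{e\}$ and $|HP|=|H|\,|P|=|G|$, so $G=HP$; since $H$ is central, $HP$ is an internal direct product, giving $G=H\times P$ with $P\cong G/H\cong\mathbb{Z}_2^n$. Therefore $G\cong\mathbb{Z}_3\times\mathbb{Z}_2^n$ and $H$ is precisely the $\mathbb{Z}_3$-factor $\{e,(0,\ldots,0,1),(0,\ldots,0,2)\}$, as claimed.
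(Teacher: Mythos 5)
Your proof is correct and follows essentially the same route as the paper: both arguments reduce to showing that the conjugation action on $H\cong\mathbb{Z}_3$ is trivial, using the identical key observation that an element $g$ inverting $a$ would make the coset $Hg$ consist entirely of involutions, contradicting the hypothesis. Your organization (deducing that $G/H$ is elementary abelian first, then splitting off a central Sylow $2$-subgroup) is a slightly cleaner packaging of the paper's Sylow/Hall-subgroup analysis, but the mathematical content is the same.
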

\begin{proof}
It is easy to verify the sufficiency, we only prove the necessity. Suppose that $x^2\in H$ and $Hx$ contains an element of order at least $3$ for any $x\in G\setminus H$.
It is clear that if $G=\mathbb{Z}_3$, then the required result follows. In the following, let $H\neq G$. Let $P$ and $Q$ be a Sylow $2$-subgroup and Hall $2'$-subgroup of $G$, respectively. If $x\in G\setminus H$ has odd order, then $x^2\in H$, it follows that $Q=H\cong \mathbb{Z}_3$, which also implies that $G$ has a unique subgroup $H$ of order $3$.
If $P$ has an element $y$ of order $4$, then $y^2\notin H$, contrary to the fact that $x^2\in H$ for all $x\in G\setminus H$. It follows that $P$ is an elementary abelian $2$-group. Thus, $G\cong \mathbb{Z}_2^n\ltimes\mathbb{Z}_3$ for some $n\ge 0$.
Now let $H=\langle a\rangle$. Take an involution $u\in P$.
Suppose for a contradiction that $a$ and $u$ do not commute. Since $uau\in H$, we have $uau=a^2=a^{-1}$. It follows that $\langle a,u\rangle\cong D_{6}$. Thus, the coset $Hu=\{u,au,a^2u\}$  consists of three involutions, a contradiction. We conclude that $a$ and $u$ commute, so $P$ is normal in $G$. This means that $G\cong \mathbb{Z}_2^n\times\mathbb{Z}_3$, as desired.
\end{proof}

The following theorem gives a characterization for normal subgroups $H$  of $G$ such that $\Gamma_{G,H}$ admits a total perfect code.

\begin{thm}\label{tpc-allg}
Let $H$ be a normal subgroup of $G$. Then $\Gamma_{G,H}$ admits a total perfect code if and only if one of the following holds:
\begin{enumerate}
  \item\label{tpc-allg-1} $|H|=2$ and $x$ is an involution for each $x\in G\setminus H$ with $x^2\in H$;
  \item\label{tpc-allg-2} $G=\mathbb{Z}_2^n\times \mathbb{Z}_3$ and $H=\{e,(0,\ldots,0,1),(0,\ldots,0,2)\}$, where $n\ge 0$.
\end{enumerate}
\end{thm}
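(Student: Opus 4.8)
The plan is to reduce the existence of a total perfect code to a purely local question on the blocks of $\Gamma_{G,H}$. By Theorem~\ref{s-th2}, the vertex set $G$ splits into blocks, each being either a single coset $Hx$ with $x^2\in H$ (inducing $K_{|H|}$ with the matching of inverse-pairs $\{y,y^{-1}\}$, $y\ne y^{-1}$, deleted) or a union $Hx\cup Hx^{-1}$ with $x^2\notin H$ (inducing the crown graph $K_{|H|,|H|}$ with a perfect matching deleted), and $\Gamma_{G,H}$ has no edges between distinct blocks. Since a total perfect code must induce on each block the exactly-one-neighbour condition, and conversely a union of total perfect codes of the blocks is a total perfect code of the whole graph, $\Gamma_{G,H}$ admits a total perfect code if and only if the induced subgraph on every block does.

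First I would settle the two local problems by elementary counting. For the crown graph $K_{t,t}$ minus a perfect matching, writing $b$ for the number of code-vertices on one side and examining the neighbourhood of a vertex on the other side forces $b\in\{1,2\}$ and then $b=t$, so a total perfect code exists iff $t=2$ (the block being a pair of disjoint edges, with the whole block as code). For the complete block $K_t$ minus a matching $M$, let $s$ be the number of $M$-unsaturated vertices; these are exactly the involutions lying in the coset, together with $e$ when the coset is $H$ itself. Counting neighbours in a would-be code $C$, the case $|C|=1$ is impossible; if $s\ge 1$ then every unsaturated vertex must lie in $C$ and $|C|=2$, which forces $s\le 2$; tracking the remaining saturated vertices then yields that a total perfect code exists precisely for $(s,t)\in\{(0,4),(1,3),(2,2)\}$, the corresponding graphs being $C_4$, $P_3$ and $K_2$.

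The global classification then falls out. The block $H$ always contains the unsaturated vertex $e$, so $s_H\ge 1$; hence the $H$-block can carry a total perfect code only when $(s_H,|H|)\in\{(1,3),(2,2)\}$, i.e. $|H|=2$ with $H\cong\mathbb{Z}_2$, or $|H|=3$ with $H\cong\mathbb{Z}_3$. If $|H|=2$, then every crown block has $t=2$ and is automatically fine, while a complete block $Hx$ with $x\notin H$ and $x^2\in H$ carries a total perfect code iff $s=2$, i.e. both its elements are involutions; since the elements ranged over by condition~\ref{tpc-allg-1} are exactly the members of all such blocks, this requirement is precisely condition~\ref{tpc-allg-1}. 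If $|H|=3$, then a crown block would need $t=2$, so there can be no block with $x^2\notin H$; and every complete block $Hx$ (which has odd size and is inverse-closed, hence $s\in\{1,3\}$) must satisfy $s=1$, that is, must contain a non-involution, equivalently an element of order at least $3$. These are exactly the hypotheses of Lemma~\ref{tcp-g3-ex}, whose conclusion gives $G=\mathbb{Z}_2^n\times\mathbb{Z}_3$ and $H=\{e,(0,\ldots,0,1),(0,\ldots,0,2)\}$, namely condition~\ref{tpc-allg-2}; the converse direction of the same lemma shows that under~\ref{tpc-allg-2} every block is a $P_3$ and hence carries a total perfect code, completing the sufficiency.

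The main obstacle is the local analysis of the complete block $K_t$ minus a matching: the delicate point is to extract the three admissible pairs $(s,t)$ correctly, and in particular to notice that the ever-present unsaturated vertex $e$ in the $H$-block is what pins $|H|$ down to $2$ or $3$, thereby eliminating the spurious possibility $(s,t)=(0,4)$ that the local count would otherwise permit. After that, the only nontrivial group-theoretic input is the recognition that the $|H|=3$ conditions are verbatim the hypotheses of Lemma~\ref{tcp-g3-ex}.
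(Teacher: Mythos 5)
Your proposal is correct and follows essentially the same route as the paper: reduce to the connected components given by Theorem~\ref{s-th2}, use the component on $H$ itself (where $e$ is always unsaturated and adjacent to everything) to force $|H|\in\{2,3\}$, analyse the remaining components in each case, and invoke Lemma~\ref{tcp-g3-ex} to identify the $|H|=3$ groups. The only difference is presentational: you first prove a complete local classification of which blocks ($K_{t,t}$ minus a perfect matching, and $K_t$ minus a matching with $s$ unsaturated vertices, admissible exactly for $(s,t)\in\{(0,4),(1,3),(2,2)\}$) admit a total perfect code, whereas the paper checks the few relevant small configurations ad hoc; both yield the same case analysis.
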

\begin{proof}
We first prove the sufficiency. Suppose that \ref{tpc-allg-1} holds.
Let $H=\{e,a\}$. If $H=G$, then clearly, $\Gamma_{G,H}$ admits a total perfect code, as desired.
Now take $y\in G\setminus H$. If $y^2\in H$, then $y$ is an involution, $Hy=\{y,ay\}$ and $ayy=a$, which imply that the connected component of $\Gamma_{G,H}$ induced by $Hy$ is an independent edge.
If $y^2\notin H$, then by Theorem~\ref{s-th2}, the component of $\Gamma_{G,H}$ induced by $(Hy)\cup(Hy^{-1})$ is two independent edges, that is $\{ay,y^{-1}\}$ and $\{y,ay^{-1}\}$.
It follows that, in this case, $\Gamma_{G,H}$ is a union of some independent edges, and so $\Gamma_{G,H}$ has a total perfect code, as desired.

Next, suppose that \ref{tpc-allg-2} holds.
If $H=G$, then clearly, $\Gamma_{G,H}\cong K_{1,2}$ admits a total perfect code, as desired. In the following, let $x\in G\setminus H$. Then $x^2\in H$ and $Hx$ contains an element of order at least $3$. By Lemma~\ref{lem-1}, $Hx$ is inverse-closed, which implies that $Hx$ has precisely one involution. We conclude that the connected component of $\Gamma_{G,H}$ induced by $Hx$ is isomorphic to $K_{1,2}$. Note that the connected component of $\Gamma_{G,H}$ induced by $H$ is also isomorphic to $K_{1,2}$. Since $x\in G\setminus H$ was arbitrary, $\Gamma_{G,H}$ has a total perfect code, as desired.

We next prove the necessity.
Suppose that $\Gamma_{G,H}$ admits a total perfect code. Clearly, $|H|\ge 2$.
We first claim that $|H|\le 3$. Suppose, for a contradiction that $|H|\ge 4$. Consider the connected component $\Gamma$ of $\Gamma_{G,H}$ induced by $H$. Let $C$ be a total perfect code of $\Gamma$. Then it must be $\{e,a\}\subseteq C$ for some $a\in H\setminus\{e\}$. Since there exists an element $x\in H\setminus C$ such that $x\ne a^{-1}$, we have $\{e,a\}\subseteq N(x)$, a contradiction. Therefore, our claim is valid, that is, $|H|=2,3$. We consider the following two cases.

\medskip
\noindent {\bf Case 1.} $|H|=2$.
\medskip

Let $H=\{e,a\}$.
Suppose that there exists $x\in G\setminus H$ such that $x^{2}\in H$. Then the connected component of $\Gamma_{G,H}$ induced by $Hx$ must be an edge. It follows that $axx=a$, so $x^2=e$. Namely, $x$ is an involution, as desired.

\medskip
\noindent {\bf Case 2.} $|H|=3$.
\medskip

Let $H=\{e,a,a^2\}$.  If $H=G$, then clearly, $\Gamma_{G,H}\cong K_{1,2}$ admits a total perfect code. Assume there exists an element $x$ in $G\setminus H$. Suppose for a contradiction that $x^2\notin H$. Then by Theorem~\ref{s-th2},
the connected component of $\Gamma_{G,H}$ induced by $(Hx)\cup(Hx^{-1})$ is displayed in Figure~\ref{f11}.
\begin{figure}[hptb]
  \centering
  \includegraphics[width=2.8cm]{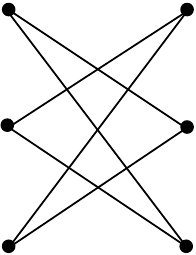}
  \caption{Induced subgroup by $(Hx)\cup(Hx^{-1})$}\label{f11}
\end{figure}
It is easy to see that the connected component of $\Gamma_{G,H}$ induced by $(Hx)\cup(Hx^{-1})$ has no total perfect codes, a contradiction. We conclude that $x^2\in H$.
If $Hx$ has no elements of order at least $3$, then every element of $Hx$ is an involution, which implies that the connected component of $\Gamma_{G,H}$ induced by $Hx$ is complete from Theorem \ref{s-th2}, this contradicts our hypothesis that $\Gamma_{G,H}$ admits a total perfect code. It follows that $x^2\in H$ and $Hx$ contains an element of order at least $3$ for any $x\in G\setminus H$. Now the desired result follows from Lemma~\ref{tcp-g3-ex}.
\end{proof}

\begin{remark}
Let $n=2q$ and $D_{2n}$ be the dihedral group as presented in \eqref{d2n}, where $q$ is an odd integer at least $3$. By Lemma \ref{dihe-nsg} \ref{dihe-nsg-2}, for any normal subgroup $H$ of order $2$,
\ref{tpc-allg-1} of Theorem~\ref{tpc-allg} occurs, so  $\Gamma_{D_{2n},H}$ admits a total perfect code.
\end{remark}

By applying Theorem~\ref{tpc-allg} to abelian groups, we classify all abelian groups $A$ and subgroups $H$ of $A$ such that $\Gamma_{A,H}$ admits a total perfect code.

\begin{thm}\label{tpc-1}
Let $A$ be an abelian group and $H$ a subgroup of $A$. Then $\Gamma_{A,H}$ admits a total perfect code if and only if one of the following holds:
\begin{enumerate}
  \item\label{tpc-1-1} $A=\mathbb{Z}_2^n\times Q,~|H|=2$, where $Q$ is an abelian group of odd order and $n\ge 1$;
  \item\label{tpc-1-2}
\begin{align}
A=\mathbb{Z}_2^n\times \mathbb{Z}_{2^{k_1}}\times \mathbb{Z}_{2^{k_2}}\times \dots \times\mathbb{Z}_{2^{k_t}}\times Q\nonumber
\end{align}
and
\begin{align}
H=\{e,(x_1,\ldots,x_n,z_1,\ldots,z_t,0)\},\nonumber
\end{align}
      where $x_i=0$ or $1$ for all $1\le i \le n$, $z_j=0$ or $2^{k_j-2}$ for all $1\leq j\leq t$, and there exists at least one $l\in \{1,\ldots,n\}$ such that $x_l=1$,
      $Q$ is an abelian group of odd order, $n,t\ge 1$, and $k_i\ge 2$ for all $1\le i \le t$;
  \item\label{tpc-1-3} $A=\mathbb{Z}_2^n\times \mathbb{Z}_3$ and $H=\{e,(0,\ldots,0,1),(0,\ldots,0,2)\}$, where $n\ge 0$.
\end{enumerate}
\end{thm}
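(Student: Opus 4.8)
The plan is to deduce everything from Theorem~\ref{tpc-allg}, which already characterizes total perfect codes in $\Gamma_{G,H}$ for an arbitrary group $G$. Specialized to the abelian group $A$, that theorem says $\Gamma_{A,H}$ admits a total perfect code precisely when either \ref{tpc-allg-1} $|H|=2$ and every $x\in A\setminus H$ with $2x\in H$ is an involution, or \ref{tpc-allg-2} $A=\mathbb{Z}_2^n\times\mathbb{Z}_3$ with $H$ its order-three subgroup. Case \ref{tpc-allg-2} is verbatim item \ref{tpc-1-3}, so no work is needed there. Thus the entire task is to unfold condition \ref{tpc-allg-1} into the structural descriptions \ref{tpc-1-1} and \ref{tpc-1-2}.

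So I would assume $|H|=2$ and write $H=\{0,a\}$ with $a$ an involution. First I reformulate \ref{tpc-allg-1}: since $2x\in H=\{0,a\}$ and ``$x$ is an involution'' means $2x=0$, the condition can only fail through some $x\notin H$ with $2x=a$; as $2\cdot 0=2a=0\neq a$, any such $x$ automatically avoids $H$. Hence \ref{tpc-allg-1} holds if and only if $a\notin 2A$, where $2A=\{2y:y\in A\}$. Next, decompose $A=A_2\times Q$ with $A_2$ the Sylow $2$-subgroup and $Q$ of odd order. Since $Q$ has no involution we get $a\in A_2$, and since doubling is a bijection of $Q$ we have $2A=2A_2\times Q$; therefore $a\notin 2A$ is equivalent to $a\notin 2A_2$.

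It then remains to decide which involutions of an abelian $2$-group lie in its image under doubling. Write $A_2=\mathbb{Z}_2^n\times\mathbb{Z}_{2^{k_1}}\times\cdots\times\mathbb{Z}_{2^{k_t}}$ with $k_j\ge 2$, so that $\mathbb{Z}_2^n$ collects exactly the cyclic factors of order $2$. An involution $a$ has each coordinate in the unique order-two subgroup of its factor, namely a $\mathbb{Z}_2$-coordinate in $\{0,1\}$ and a $\mathbb{Z}_{2^{k_j}}$-coordinate equal to $0$ or $2^{k_j-1}$. Since $2A_2$ is the product of the doubled factors, with $2\mathbb{Z}_2^n=0$ and $2^{k_j-1}=2\cdot 2^{k_j-2}\in 2\mathbb{Z}_{2^{k_j}}$ for every $k_j\ge 2$, each coordinate coming from a factor of order at least $4$ already lies in $2A_2$, whereas a nonzero $\mathbb{Z}_2$-coordinate never does. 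Hence $a\notin 2A_2$ if and only if $a$ has a nonzero coordinate in at least one $\mathbb{Z}_2$-factor, which in particular forces $n\ge 1$. Splitting on whether $A_2$ is elementary abelian now produces the two cases: when $t=0$ (so $A_2=\mathbb{Z}_2^n$ with $n\ge 1$) every involution $a\neq 0$ qualifies, which is \ref{tpc-1-1}; when $t\ge 1$ the admissible $a$ are exactly those having some coordinate $x_l=1$ in the elementary part, the coordinates in the large cyclic factors being free, giving the description in \ref{tpc-1-2}.

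The reformulations in the second paragraph are routine. The step demanding genuine care is the divisibility computation of the third paragraph: pinning down $2\mathbb{Z}_{2^{k_j}}$ and verifying that the order-two element $2^{k_j-1}$ of a cyclic factor of order $\ge 4$ is a double, while the generator of a $\mathbb{Z}_2$-factor is not. Matching this dichotomy against the indexing in \ref{tpc-1-1} and \ref{tpc-1-2}—in particular noting that $n\ge 1$ is forced and that the coordinates in the large cyclic factors are left unconstrained—then completes the identification.
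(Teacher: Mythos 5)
Your proposal is correct and follows essentially the same route as the paper: both reduce everything to Theorem~\ref{tpc-allg}, split off the odd-order part $Q$, and determine which involutions $a$ generate a subgroup violating condition \ref{tpc-allg-1} of that theorem --- your reformulation ``condition \ref{tpc-allg-1} holds iff $a\notin 2A$'' is just a cleaner packaging of the paper's explicit construction of a non-involution $y$ with $2y$ equal to the nontrivial element of $H$. One remark: your description of the admissible coordinates $z_j$ as $0$ or $2^{k_j-1}$ is the mathematically correct one (these are the only coordinates of an involution in $\mathbb{Z}_{2^{k_j}}$), so the ``$2^{k_j-2}$'' appearing in the published statement of item \ref{tpc-1-2} must be a typo, since an element with a coordinate equal to $2^{k_j-2}$ has order $4$ and cannot generate a subgroup of order $2$.
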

\begin{proof}
By Theorem~\ref{tpc-allg}, it is easy to verify that the sufficiency is valid. We next prove the necessity. Assume that $\Gamma_{A,H}$ admits a total perfect code. Then, by Theorem~\ref{tpc-allg}, $|H|=2$ or $3$.
If $|H|=3$, then Theorem~\ref{tpc-allg} implies that \ref{tpc-1-3} holds.

We only need to consider the case $|H|=2$.
Let $A=A_2\times Q$, where $Q$ is an abelian group of odd order. Since $|A_2|\ge 2$, we have $A_2=\mathbb{Z}_2^n\times \mathbb{Z}_{2^{k_1}}\times \mathbb{Z}_{2^{k_2}}\times \dots \times\mathbb{Z}_{2^{k_t}}$ with $n+t\geq1$ and $k_i\ge 2$ for all $1\le i \le t$. If $t=0$, then \ref{tpc-1-1} holds. Assume that $t>0$. Then
$$H=\{e,(x_1,\ldots,x_n,z_1,\ldots,z_t,0)\},$$
where
$x_i=0$ or $1$ for all $1\le i \le n$, and $z_j=0$ or $2^{k_j-1}$ for all $1\le j \le t$, and there exists at least a positive integer $l$ such that $x_l=1$ with $1\leq l\leq n$, or $z_l=2^{k_l-1}$ with $1\le l \le t$.

Suppose that $n=0$ or $x_i=0$ for all $1\le i \le n$. Without loss of generality, we may assume $z_1=2^{k_1-2}$. Then let $y=(0,0,\ldots,0,2^{k_1-2},y_2,\ldots,y_t,0)$, where for all $2\le j \le t$,
\begin{align}\label{eq-tcp-ab-2}
y_j=\left\{
  \begin{array}{ll}
    0, & \hbox{if $z_j=0$;} \\
    2^{k_j-2}, & \hbox{if $z_j=2^{k_j-1}$.}
  \end{array}
\right.
\end{align}
It follows that $2y\in H$. However, $y$ is not an involution, contrary to Theorem~\ref{tpc-allg}. Thus, \ref{tpc-1-2} holds.
\end{proof}

\section{(Total) Perfect codes of $\Gamma_{G,H}^{+}$}

In this section, we consider extended subgroup sum graphs. The first result gives a characterization for normal subgroups $H$  of $G$ such that $\Gamma_{G,H}^+$ admits a perfect code.

\begin{thm}\label{pc-addg}
Let $H$ be a normal subgroup of $G$. Then
$\Gamma_{G,H}^{+}$ has a perfect code if and only if either $H$ is trivial or $G^2\subseteq H$.
\end{thm}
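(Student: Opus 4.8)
The plan is to exploit the component structure of $\Gamma_{G,H}^+$ furnished by Theorem~\ref{s-th2} and reduce the existence of a perfect code to a purely local question on each connected component, since a perfect code of a disjoint union of graphs exists if and only if each component admits one. Writing $t=|H|$, Theorem~\ref{s-th2}\ref{s-th2-1} tells me that every component of $\Gamma_{G,H}^+$ is either a complete graph $K_t$ (induced by a coset $Hx$ with $x^2\in H$) or a complete bipartite graph $K_{t,t}$ (induced by $Hx\cup Hx^{-1}$ with $x^2\notin H$). Hence the whole problem collapses to deciding which of $K_t$ and $K_{t,t}$ carries a perfect code.

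First I would record two elementary facts. The complete graph $K_t$ always admits a perfect code: a single vertex $v$ is an independent set whose closed neighbourhood is all of $K_t$, so every other vertex is adjacent to exactly $v$. By contrast, $K_{t,t}$ with parts $A$ and $B$ admits a perfect code if and only if $t=1$. Indeed, for $t=1$ it is a single edge and either endpoint works. For $t\ge 2$, any independent set $C$ lies entirely in $A$ or entirely in $B$; say $C\subseteq A$. Each vertex of $B$ is then adjacent to all $|C|$ vertices of $C$, forcing $|C|=1$ by the perfect-code condition; but then any of the remaining $t-1\ge 1$ vertices of $A$ is adjacent only to vertices of $B$, none of which lie in $C$, so it is dominated by no element of $C$, a contradiction. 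This dichotomy is the crux of the argument.

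With these facts in hand the sufficiency is immediate. If $H$ is trivial then either $H=\{e\}$, in which case Theorem~\ref{s-th1}\ref{s-th1-1} presents $\Gamma_{G,\{e\}}^+$ as a disjoint union of single edges and isolated vertices, each of which trivially has a perfect code, or $H=G$, in which case $\Gamma_{G,G}^+=K_{|G|}$ has a perfect code. If instead $H$ is non-trivial but $G^2\subseteq H$, then $x^2\in H$ for every $x\in G$, so by Theorem~\ref{s-th2}\ref{s-th2-1} no $K_{t,t}$ component occurs and every component is a $K_t$, which always carries a perfect code.

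For necessity I would argue contrapositively: suppose $H$ is non-trivial and $G^2\not\subseteq H$. Non-triviality gives $t=|H|\ge 2$, and $G^2\not\subseteq H$ furnishes some $x\in G$ with $x^2\notin H$. By Theorem~\ref{s-th2}\ref{s-th2-1} the set $Hx\cup Hx^{-1}$ induces a component isomorphic to $K_{t,t}$ with $t\ge 2$, which by the dichotomy above has no perfect code; hence $\Gamma_{G,H}^+$ has none either. The only genuine obstacle is the verification that $K_{t,t}$ fails to admit a perfect code for $t\ge 2$, and everything else is bookkeeping against Theorem~\ref{s-th2}.
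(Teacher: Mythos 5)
Your proposal is correct and follows essentially the same route as the paper: both reduce the question to the component structure given by Theorem~\ref{s-th2}, observe that complete components always carry a perfect code while a $K_{t,t}$ component with $t\ge 2$ does not, and handle the trivial cases via Theorem~\ref{s-th1}. The only difference is that you spell out the (easy) verification that $K_{t,t}$ admits no perfect code for $t\ge 2$, which the paper merely asserts.
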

\begin{proof}
If $H$ is trivial, then $\Gamma_{G,H}^{+}$ has a perfect code by Theorem~\ref{s-th1}. Now suppose that $G^2\subseteq H$ and $H$ is non-trivial. Then for any $x\in G\setminus H$, we have $x^2\in H$. It follows from Theorem \ref{s-th2} \ref{s-th1-1} that any connected component of $\Gamma_{G,H}^{+}$ is the induced subgraph by a coset of $H$, which is isomorphic to $K_{|H|}$. As a consequence, $\Gamma_{G,H}^{+}$ has a perfect code, as desired.

Conversely,
suppose that $\Gamma_{G,H}^{+}$ has a perfect code. Assume that $H$ is non-trivial. It suffices to show that $G^2\subseteq H$.
Suppose for a contradiction that there exists $x\in G\setminus H$ such that $x^2\notin H$. Then by Theorem \ref{s-th2} \ref{s-th1-1}, the connected component of $\Gamma_{G,H}^{+}$ induced by $(Hx)\cup (Hx^{-1})$ is isomorphic to $K_{|H|,|H|}$, which implies that $\Gamma_{G,H}^{+}$ has no perfect codes as $|H|\ge 2$. We conclude $x^2\in H$. Namely, $G^2\subseteq H$, as desired.
\end{proof}

Let $A$ be an abelian group. Then $2A$ is a subgroup of $A$. In particular, if
$$A=\mathbb{Z}_{2^{m_1}}\times \mathbb{Z}_{2^{m_2}}\times \cdots \times \mathbb{Z}_{2^{m_t}}\times Q,$$
where $m_i\ge 1$ for each $1\le i\le t$ and $Q$ has odd order, then
\begin{align*}
  2A= & H_1\times H_2\times \cdots \times H_t\times Q \\
   =& \{(x_1,\ldots,x_t,y): x_i \text{ is even for all $1\le i \le t$},y\in Q\}
\end{align*}
where for any $1\le i\le t$, $H_i$ is the unique cyclic subgroup of $\mathbb{Z}_{2^{m_i}}$ which has order $2^{m_i-1}$.
In particular, if $A$ has odd order, then $2A=A$.

Applying Theorem~\ref{pc-addg} to abelian groups, we have the following corollary.

\begin{cor}
The following hold:
\begin{enumerate}
  \item\label{coro-1} If $G$ is an elementary abelian $2$-group, then $\Gamma_{G,H}^{+}$ has a perfect code for any subgroup $H$ of $G$;
  \item\label{coro-2} If $G$ is an abelian group of odd order, then $\Gamma_{G,H}^{+}$ has a perfect code if and only if $H$ is trivial;
  \item\label{coro-3} If $n$ is an even positive integer, then $\Gamma_{\mathbb{Z}_n,H}^{+}$ has a perfect code if and only if $H\in \{\mathbb{Z}_n,\{0\},\langle 2\rangle\}$;
  \item\label{coro-4} If $G=\mathbb{Z}_4^n$ for some $n\ge 1$, then $\Gamma_{G,H}^{+}$ has a perfect code if and only if $H$ is trivial or $H$ contains all involutions of $G$.
\end{enumerate}
\end{cor}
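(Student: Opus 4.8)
The plan is to obtain all four statements as immediate specializations of Theorem~\ref{pc-addg}, which says that $\Gamma_{G,H}^{+}$ admits a perfect code if and only if $H$ is trivial or $G^2\subseteq H$. The one translation I need is that for an abelian group $A$ written additively, the set of squares $G^2=\{x^2:x\in G\}$ becomes $\{2a:a\in A\}=2A$, the subgroup identified in the paragraph preceding the corollary. So in each case I only have to compute $2A$ explicitly and decide when a subgroup $H$ contains it, keeping in mind that the case $H=G$ is already absorbed into ``$H$ is trivial.''

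For (i), if $G$ is an elementary abelian $2$-group then $2a=0$ for every $a\in G$, hence $G^2=\{0\}$, which is contained in every subgroup $H$; Theorem~\ref{pc-addg} then gives a perfect code for all $H$. For (ii), if $G$ has odd order the doubling map is a bijection, so $G^2=2A=A$; thus $G^2\subseteq H$ forces $H=G$, which is already trivial, and the criterion collapses to ``$H$ is trivial.''

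For (iii), with $G=\mathbb{Z}_n$ and $n$ even, I would note that $2\mathbb{Z}_n=\langle 2\rangle$, the subgroup of order $n/2$ generated by $2$. Since every subgroup of $\mathbb{Z}_n$ has the form $\langle d\rangle$ with $d\mid n$, the containment $\langle 2\rangle\subseteq\langle d\rangle$ holds exactly when $2\in\langle d\rangle$, i.e.\ when $d\mid 2$, so $d\in\{1,2\}$ and $H\in\{\mathbb{Z}_n,\langle 2\rangle\}$. Adjoining the remaining trivial case $H=\{0\}$ yields the list $\{\mathbb{Z}_n,\{0\},\langle 2\rangle\}$. For (iv), with $G=\mathbb{Z}_4^n$ I would compute $2G=\{0,2\}^n$, which is precisely the set of elements $x$ with $2x=0$, that is, the identity together with all involutions of $G$. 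Because any subgroup contains $0$, the condition $2G\subseteq H$ is equivalent to ``$H$ contains all involutions of $G$,'' and Theorem~\ref{pc-addg} completes the argument.

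Since the corollary is a routine unpacking of Theorem~\ref{pc-addg}, I do not anticipate any genuine obstacle; the only point needing a little care is the bookkeeping in parts (iii) and (iv)—correctly identifying $2A$ and checking that the subgroup-containment condition matches the stated description, in particular verifying that for a \emph{subgroup} $H$ the statement ``$H$ contains all involutions'' is exactly the condition $G^2\subseteq H$.
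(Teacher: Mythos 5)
Your proposal is correct and matches the paper's approach: the paper states this corollary as an immediate application of Theorem~\ref{pc-addg} together with the computation of $2A$ in the preceding paragraph, which is exactly the specialization you carry out. The case-by-case identifications of $2A$ and the containment checks (including the bookkeeping for $\langle 2\rangle$ in part (iii) and the involutions in part (iv)) are all accurate.
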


We conclude the paper by giving a necessary and sufficient condition for which normal subgroups $H$ of $G$ satisfy that $\Gamma_{G,H}^{+}$ has a total perfect code.

\begin{thm}
Let $H$ be a normal subgroup of $G$. Then $\Gamma_{G,H}^{+}$ has a total perfect code if and only if $|G|$ is even and $|H|=2$.
\end{thm}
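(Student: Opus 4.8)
The plan is to reduce the question to the connected components of $\Gamma_{G,H}^{+}$ and then decide, component type by component type, which of them carries a total perfect code. First I would record the routine reduction: if $C$ is a total perfect code of $\Gamma_{G,H}^{+}$ and $D$ is a connected component, then every neighbour of a vertex of $D$ lies in $D$, so $C\cap D$ is a total perfect code of $D$; conversely a union of total perfect codes over all components is one for the whole graph. Hence $\Gamma_{G,H}^{+}$ admits a total perfect code if and only if every connected component does. By Theorem~\ref{s-th2}\ref{s-th2-1}, writing $t=|H|$, each component is isomorphic either to $K_{t}$ (the cosets $Hx$ with $x^{2}\in H$) or to $K_{t,t}$ (the pairs of cosets $Hx\cup Hx^{-1}$ with $x^{2}\notin H$).

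Next I would classify which of these two graphs admit a total perfect code. For the complete bipartite graph $K_{t,t}$ with parts $X$ and $Y$, a vertex of $X$ has exactly $|C\cap Y|$ neighbours in $C$ and a vertex of $Y$ has exactly $|C\cap X|$ neighbours in $C$, so $C$ is a total perfect code precisely when $|C\cap X|=|C\cap Y|=1$; choosing one vertex from each part always works, so $K_{t,t}$ admits a total perfect code for every $t\ge 1$. For the complete graph $K_{t}$ the situation is more restrictive: a vertex $v$ has exactly $|C|-[v\in C]$ neighbours in $C$, so every vertex of $C$ forces $|C|=2$ while every vertex outside $C$ forces $|C|=1$. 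A short case check on $t$ then shows $K_{t}$ admits a total perfect code if and only if $t=2$ (the edge, with $C$ the whole vertex set): $K_{1}$ fails because its single vertex has no neighbour at all, and $K_{t}$ with $t\ge 3$ fails because the two requirements are incompatible.

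The key structural observation tying these together is that the coset $H$ itself is always a component of type $K_{t}$: since $e^{2}=e\in H$, Theorem~\ref{s-th2}\ref{s-th2-1} makes the subgraph induced by $H$ a copy of $K_{t}$. Hence, if $\Gamma_{G,H}^{+}$ has a total perfect code, this particular component must have one, which forces $t=|H|=2$ by the previous paragraph; and then $|G|$ is even by Lagrange's theorem. For the converse, once $|H|=2$ every component is either $K_{2}$ or $K_{2,2}$, each of which admits a total perfect code, so the whole graph does; moreover $|H|=2$ already entails $|G|$ even, so both stated conditions hold.

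I do not expect a serious obstacle here, since the component description in Theorem~\ref{s-th2}\ref{s-th2-1} does most of the work. The only points demanding care are the elementary but slightly fiddly verification of total perfect codes in $K_{t}$ (including the degenerate case $t=1$, i.e.\ trivial $H$, where the identity lies in an isolated $K_{1}$ and so blocks any total perfect code), and the remark that the $e$-component is unavoidably a $K_{t}$, which is exactly what pins down $|H|=2$ rather than merely permitting it.
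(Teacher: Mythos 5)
Your proof is correct and follows the same route the paper intends: the paper's proof is the one-liner ``trivially follows from Theorems~\ref{s-th1} and \ref{s-th2},'' and your argument is precisely the detailed component-by-component analysis (the $H$-coset forcing a $K_{|H|}$ component, which admits a total perfect code only when $|H|=2$) that this citation leaves implicit. Your handling of the degenerate cases $H=\{e\}$ and $H=G$ is also consistent with Theorem~\ref{s-th1}.
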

\begin{proof}
The result trivially follows from Theorems~\ref{s-th1} and \ref{s-th2}.
\end{proof}

\section*{Acknowledgements}

X. Ma started thinking on this project during him visit to  Southern University of Science and Technology, and
he is very thankful to  Prof. Cai Heng Li for hospitality and moral support.
X. Ma is supported by the National Natural Science Foundation of China (Grant No. 12326333) and the Shaanxi Fundamental Science Research Project for Mathematics and Physics (Grant No. 22JSQ024). Y.~Yang is supported by the National Natural Science Foundation of China (Grant No. 12101575).

\section*{Data Availability Statement}

No data was used for the research described in the article.


\begin{thebibliography}{00}

\bibitem{AB23} B. Abughazale and O.A.  Abughneim,  Semi square stable graphs and efficient dominating sets, \textit{Trans. Comb.}, 12 (2023), 107--113.

\bibitem{AM16} M. Amooshahi  and B. Taeri, On Cayley sum graphs of non-abelian groups, \textit{Graphs Combin.}, 32 (2016), 17--29.



\bibitem{Cam22} P.J. Cameron, R. Raveendra Prathap and
T. Tamizh Chelvam, Subgroup sum graphs of finite abelian groups, \textit{Graphs Combin.}, 38 (2022), 114.

\bibitem{CWZ} J. Chen, Y. Wang and B. Xia, Characterization of subgroup perfect codes in Cayley graphs, \textit{Discrete Math.}, 343 (2020), 111813.

\bibitem{CFRK} F.R.K. Chung, Diameters and eigenvalues. \textit{J. Amer. Math. Soc.}, 2 (1989), 187--196.

\bibitem{Ded} R. Dedekind, Ueber Gruppen, deren \"sammtliche Theiler Normaltheiler sind, \textit{Math. Ann.}, 48 (1897), 548--561.


\bibitem{DI17} I.J. Dejter and O. Tomaiconza, Nonexistence of efficient dominating sets in the Cayley graphs generated by transposition trees of diameter 3, \textit{Discrete Appl. Math.}, 232 (2017), 116--124.



\bibitem{DYP2} Y. Deng,  Efficient dominating sets in circulant graphs with domination number prime, \textit{Inform. Process. Lett.}, 114 (2014), 700--702.

\bibitem{DYP} Y. Deng, Y. Sun, Q. Liu and H. Wang, Efficient dominating sets in circulant graphs, \textit{Discrete Math.}, 340 (2017), 1503--1507.

\bibitem{FHZ} R. Feng, H. Huang and S. Zhou, Perfect codes in circulant graphs, \textit{Discrete Math.}, 340 (2017), 1522--1527.

\bibitem{GAA} A.A. Ghidewon, R.H. Hammack and D.T. Taylor, Total perfect codes in tensor products of graphs,
\textit{Ars Combin.}, 88 (2008), 129--134.


\bibitem{HTW} T.W. Haynes, S.T. Hedetniemi and P. Slater, \textit{Fundamentals of Domination in Graphs}, Marcel
Dekker, Inc., New York, 1998.


\bibitem{HXZ18} H. Huang, B. Xia and S. Zhou, Perfect codes in Cayley graphs, \textit{SIAM J. Discrete Math.}, 32 (2018), 548--559.

\bibitem{JL01} G. James and M. Liebeck,
\textit{Representations and Characters of Groups}, Cambridge University Press,
Cambridge, 2001.

\bibitem{Kh23} Y. Khaefi, Z. Akhlaghi and B. Khosravi, On the subgroup perfect codes in Cayley graphs, \textit{Des. Codes Cryptogr.}, 91 (2023), 55--61.

\bibitem{Kh24} Y. Khaefi, Z. Akhlaghi and B. Khosravi, On the subgroup regular set in Cayley graphs, \textit{arXiv}: 2308.11434.

\bibitem{Kr} J. Kratochv\'{i}l,  Perfect codes over graphs, \textit{J. Combin. Theory Ser. B}, 40 (1986), 224--228.

\bibitem{YSK22} Y.S. Kwon, J. Lee and M.Y. Sohn, Classification of efficient dominating sets of circulant graphs of degree 5, \textit{Graphs Combin.}, 38 (2022), 120.

\bibitem{YSK20} Y.S. Kwon, M.Y. Sohn and X. Chen, Existence of efficient total dominating sets of circulant graphs of degree 4, \textit{Discrete Math.}, 343 (2020), 111742.

\bibitem{Le} J. Lee, \newblock Independent perfect domination sets in
Cayley graphs, \textit{J. Graph Theory}, 37 (2001), 213--219.


\bibitem{Ma20} X. Ma, M. Feng and K. Wang, Subgroup perfect codes in Cayley sum graphs, \textit{Des. Codes Cryptogr.}, 88 (2020), 1447--1461.

\bibitem{MWWZ} X. Ma, G.L. Walls, K. Wang and S. Zhou, Subgroup perfect codes in Cayley graphs,  \textit{SIAM J. Discrete Math.}, 34 (2020), 1909--1921.

\bibitem{Ma22} X. Ma, K. Wang and Y. Yang, Perfect codes in Cayley sum graphs, \textit{Electron. J. Combin.}, 29 (2022), 1.21.



\bibitem{TT13} T. Tamizh Chelvam and S. Mutharasu, Subgroups as efficient dominating sets in Cayley graphs, \textit{Discrete Appl. Math.}, 161 (2013), 1187--1190.

\bibitem{WX21} X. Wang, S. Xu and X. Li, Independent perfect dominating sets in semi-Cayley graphs, \textit{Theoret. Comput. Sci.}, 864 (2021), 50--57.

\bibitem{Wa24} X. Wang, L. Wei, S. Xu and S. Zhou, Subgroup total perfect codes in Cayley sum graphs, \textit{Des. Codes Cryptogr.}, 92 (2024), 2599--2613.

\bibitem{YYF} Y. Yang, X. Ma and Q. Zeng, Perfect codes in quintic Cayley graphs on Abelian groups, \textit{arXiv}: 2207.06743.


\bibitem{Zh24} J. Zhang, On subgroup perfect codes in Cayley sum graphs, \textit{Finite Fields Appl.}, 95 (2024), 102393.

\bibitem{ZZ20} J. Zhang and S. Zhou, On subgroup perfect codes in Cayley graphs,  \textit{European J. Combin.}, 91 (2021), 103228.

\bibitem{ZZ21} J. Zhang and S. Zhou, Corrigendum to ``On subgroup perfect codes in Cayley graphs'' [European J. Combin. 91 (2021)
103228], \textit{European J. Combin.}, 101 (2022), 103461.


\bibitem{Zh} S. Zhou, Total perfect codes in Cayley graphs, \textit{Des. Codes Cryptogr.}, 81 (2016), 489--504.

\bibitem{Z15}
S. Zhou, Cyclotomic graphs and perfect codes, \textit{J. Pure Appl. Algebra}, 223 (2019), 931--947.
\end{thebibliography}
\end{document}